\newtheorem{observation}{Remark}[section]
\newtheorem{theorem}[observation]{Theorem}
\newtheorem{definition}[observation]{Definition}
\newtheorem{example}[observation]{Example}
\newtheorem{proposition}[observation]{Proposition}
\newdimen\w@dth
\def\setw@dth#1#2{\setbox\z@\hbox{\scriptsize $#1$}\w@dth=\wd\z@
\setbox\@ne\hbox{\scriptsize $#2$}\ifnum\w@dth<\wd\@ne \w@dth=\wd\@ne \fi
\advance\w@dth by 1.2em}
\def\t@^#1_#2{\allowbreak\def\n@one{#1}\def\n@two{#2}\mathrel
{\setw@dth{#1}{#2}
\mathop{\hbox to \w@dth{\rightarrowfill}}\limits
\ifx\n@one\empty\else ^{\box\z@}\fi
\ifx\n@two\empty\else _{\box\@ne}\fi}}
\def\t@@^#1{\@ifnextchar_ {\t@^{#1}}{\t@^{#1}_{}}}
\def\t@left^#1_#2{\def\n@one{#1}\def\n@two{#2}\mathrel{\setw@dth{#1}{#2}
\mathop{\hbox to \w@dth{\leftarrowfill}}\limits
\ifx\n@one\empty\else ^{\box\z@}\fi
\ifx\n@two\empty\else _{\box\@ne}\fi}}
\def\t@@left^#1{\@ifnextchar_ {\t@left^{#1}}{\t@left^{#1}_{}}}
\def\two@^#1_#2{\def\n@one{#1}\def\n@two{#2}\mathrel{\setw@dth{#1}{#2}
\mathop{\vcenter{\hbox to \w@dth{\rightarrowfill}\kern-1.7ex
                 \hbox to \w@dth{\rightarrowfill}}%
       }\limits
\ifx\n@one\empty\else ^{\box\z@}\fi
\ifx\n@two\empty\else _{\box\@ne}\fi}}
\def\tw@@^#1{\@ifnextchar_ {\two@^{#1}}{\two@^{#1}_{}}}
\def\tofr@^#1_#2{\def\n@one{#1}\def\n@two{#2}\mathrel{\setw@dth{#1}{#2}
\mathop{\vcenter{\hbox to \w@dth{\rightarrowfill}\kern-1.7ex
                 \hbox to \w@dth{\leftarrowfill}}%
       }\limits
\ifx\n@one\empty\else ^{\box\z@}\fi
\ifx\n@two\empty\else _{\box\@ne}\fi}}
\def\t@fr@^#1{\@ifnextchar_ {\tofr@^{#1}}{\tofr@^{#1}_{}}}
\newdimen\W@dth
\def\setW@dth#1#2{\setbox\z@\hbox{$#1$}\W@dth=\wd\z@
\setbox\@ne\hbox{$#2$}\ifnum\W@dth<\wd\@ne \W@dth=\wd\@ne \fi
\advance\W@dth by 1.2em}
\def\T@^#1_#2{\allowbreak\def\N@one{#1}\def\N@two{#2}\mathrel
{\setW@dth{#1}{#2}
\mathop{\hbox to \W@dth{\rightarrowfill}}\limits
\ifx\N@one\empty\else ^{\box\z@}\fi
\ifx\N@two\empty\else _{\box\@ne}\fi}}
\def\T@@^#1{\@ifnextchar_ {\T@^{#1}}{\T@^{#1}_{}}}
\def\T@left^#1_#2{\def\N@one{#1}\def\N@two{#2}\mathrel{\setW@dth{#1}{#2}
\mathop{\hbox to \W@dth{\leftarrowfill}}\limits
\ifx\N@one\empty\else ^{\box\z@}\fi
\ifx\N@two\empty\else _{\box\@ne}\fi}}
\def\T@@left^#1{\@ifnextchar_ {\T@left^{#1}}{\T@left^{#1}_{}}}
\def\Tofr@^#1_#2{\def\N@one{#1}\def\N@two{#2}\mathrel{\setW@dth{#1}{#2}
\mathop{\vcenter{\hbox to \W@dth{\rightarrowfill}\kern-1.7ex
                 \hbox to \W@dth{\leftarrowfill}}%
       }\limits
\ifx\N@one\empty\else ^{\box\z@}\fi
\ifx\N@two\empty\else _{\box\@ne}\fi}}
\def\T@fr@^#1{\@ifnextchar_ {\Tofr@^{#1}}{\Tofr@^{#1}_{}}}
\def\Two@^#1_#2{\def\N@one{#1}\def\N@two{#2}\mathrel{\setW@dth{#1}{#2}
\mathop{\vcenter{\hbox to \W@dth{\rightarrowfill}\kern-1.7ex
                 \hbox to \W@dth{\rightarrowfill}}%
       }\limits
\ifx\N@one\empty\else ^{\box\z@}\fi
\ifx\N@two\empty\else _{\box\@ne}\fi}}
\def\Tw@@^#1{\@ifnextchar_ {\Two@^{#1}}{\Two@^{#1}_{}}}
\def\to{\@ifnextchar^ {\t@@}{\t@@^{}}}
\def\from{\@ifnextchar^ {\t@@left}{\t@@left^{}}}
\def\tofro{\@ifnextchar^ {\t@fr@}{\t@fr@^{}}}
\def\To{\@ifnextchar^ {\T@@}{\T@@^{}}}
\def\From{\@ifnextchar^ {\T@@left}{\T@@left^{}}}
\def\Two{\@ifnextchar^ {\Tw@@}{\Tw@@^{}}}
\def\Tofro{\@ifnextchar^ {\T@fr@}{\T@fr@^{}}}
\title{Tangent Categories from the Coalgebras of Differential Categories}
\author{Robin Cockett \footnote{Partially supported by NSERC (Canada)} , Jean-Simon Pacaud Lemay \footnote{Thanks to Kellogg College, the Clarendon Fund, and the Oxford Google-DeepMind Graduate Scholarship for financial support.} , and Rory B. B. Lucyshyn-Wright \footnote{Supported by the Natural Sciences and Engineering Research Council of Canada (NSERC).}}
\begin{document}
\allowdisplaybreaks

\maketitle



\begin{abstract}
Following the pattern from linear logic, the coKleisli category of a differential category is a Cartesian differential category.   What then is the coEilenberg-Moore category of 
a differential category?  The answer is a tangent category!  A key example arises from the opposite of  the category of Abelian groups with the free exponential modality. 
The coEilenberg-Moore category, in this case, is the opposite of the category of commutative rings. That the latter is a tangent category captures a fundamental aspect 
of both algebraic geometry and Synthetic Differential Geometry. The general result applies when there are no negatives and thus encompasses examples arising from 
combinatorics and computer science. This is an extended version of a conference paper for CSL2020. 
\end{abstract}

\section{Introduction}

\begin{figure*}[!t]
\centering
\begin{tikzpicture}[->,>=stealth',shorten >=1pt,node distance=0.5cm,auto,
every node/.style = {shape=rectangle, rounded corners,
                     align=center}]
\node (pq1) [draw=black, thick] {\footnotesize Differential Categories\\ \footnotesize Blute, Cockett, Seely \cite{blute2006differential}};
\node (pq2) [draw=black, thick, right=of pq1] {\footnotesize Cartesian Differential Categories\\ \footnotesize Blute, Cockett, Seely \cite{blute2009cartesian}};
\node (pq3) [draw=black, thick, above right=of pq2] {\footnotesize Restriction Differential Categories\\ {\footnotesize Cockett, Cruttwell, Gallagher \cite{cockett2011differential}}};
\node (pq4) [draw=black, thick, below right=of pq2, below=4cm of pq3] {\footnotesize Tangent Categories\\ {\footnotesize Rosicky \cite{rosicky1984abstract}} \\ {\footnotesize Cockett, Cruttwell \cite{cockett2014differential}}};
\path[->,thick] (pq1) edge[bend left=45] node[midway, above, sloped] {\footnotesize coKleisli}  (pq2)
(pq2) edge[bend left=40] node[midway, below, sloped] {\footnotesize $\otimes$-Representation}  (pq1)
                     (pq3) edge[bend left=20] node[midway, above, sloped] {\footnotesize Manifold 
                        Completion} (pq4)
                        (pq2.east)  edge[bend left=25] node [midway, above, sloped] (TextNode) {$\subset$} (pq4) 
                          (pq2.east)  edge[bend right=25] node [midway, below, sloped] {$\subset$} (pq3)
                           (pq3)  edge[bend right=20] node [midway, above, sloped] {\footnotesize Total
                           Maps} (pq2) 
                             (pq4.west)  edge[bend left=25] node [midway, below, sloped] {\footnotesize Differential 
                           Objects} (pq2) 
                                 (pq1.south) edge[bend right=35] node [midway, below, sloped] {\footnotesize coEilenberg-Moore \\
                                \textbf{\footnotesize Story of this paper}}  (pq4);
        \end{tikzpicture}
        \label{worldofdiffcats}
        \caption{The world of differential categories and how it's all connected.}
\end{figure*}
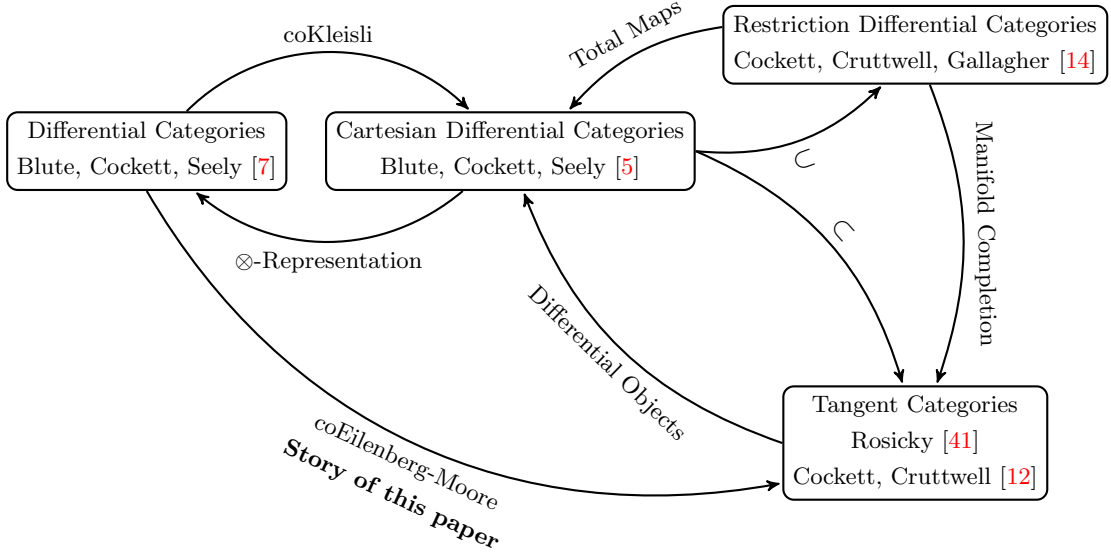

It is well-established, following the pattern from linear logic \cite{girard1987linear}, that the coKleisli category of a (tensor) differential category is a Cartesian differential category \cite{blute2009cartesian}.   
What then is the coEilenberg-Moore category of a (tensor) differential category? The answer, which is the subject of this paper, is that, under mild limit assumptions, it is a tangent category \cite{cockett2014differential}. A tangent category is a category equipped with an endofunctor and several natural transformations that formalize the basic properties of the tangent bundle functor on the category of smooth manifolds. The study of the tangent category structure of coEilenberg-Moore categories of such differential categories was initiated by the third author in talks at the Category Theory 2014 conference \cite{roryCT2014} and the Foundational Methods in Computer Science conference in 2014 \cite{roryFMCS}.  

The use of differentiation in programming, particularly for applications in machine learning, has renewed computer scientists' interest in the abstract semantics of differentiation.   Tensor differential categories provide perhaps the simplest abstract description of differentiation.  Tangent categories, on the other hand, are at quite the other end of the spectrum, providing an abstract semantics for differential geometry.  That the two are directly linked by the coEilenberg-Moore construction (which is purely algebraic) witnesses that there is a surprisingly direct relationship between differential programming and differential geometry which might usefully be exploited.

In Figure 1 the relationships between the various differential categories are illustrated. The investigation of differential structure of this kind was initiated by Erhrard \cite{ehrhard2002kothe} and formulated as a categorical axiomatization in \cite{blute2006differential}. Classical smooth functions arose indirectly as the coKleisli maps of these differential categories: thus, the next step was to directly 
axiomatize this classical intuition. This was accomplished in \cite{blute2009cartesian}, where Cartesian differential categories were introduced. By considering the representability of a tensor product in Cartesian differential 
categories it was then possible to extract a (tensor) differential category from a Cartesian differential category \cite{blute2015cartesian}. Classical analysis considers maps that are not defined everywhere 
and, thus, the theory of Cartesian differential categories with partiality was developed \cite{cockett2011differential}.  It was a natural step from there to consider differentiable manifolds, and this created a desire to develop a basic 
axiomatization for differential geometry: this led to the development of abstract differential geometry based on tangent categories, which had been introduced by Rosick{\'y} in \cite{rosicky1984abstract} and were later generalized and studied further by Cockett and Cruttwell in \cite{cockett2014differential}.  An important alternative approach to tangent categories was introduced by Leung \cite{leung2017classifying} and was further developed by Garner in \cite{garner2018embedding} to provide a view of tangent categories as categories enriched in Weil spaces.  Cartesian differential categories are always examples of tangent categories \cite[Proposition 4.7]{cockett2014differential}. Conversely the differential bundles of a tangent category over a fixed base, under mild limit assumptions,  form a Cartesian differential category, showing that a tangent category is {\em locally\/} a Cartesian differential category \cite[Theorem 5.14]{cockett2016differential}. These observations tightly linked Cartesian differential and tangent categories; in fact this relationship is captured by an adjunction \cite[Theorem 4.12]{cockett2014differential}. The current paper provides an important direct link between (tensor) differential categories and tangent categories. 

An important example of a (tensor) differential category is the opposite of the category of Abelian groups with the free exponential modality \cite{blute2006differential} where the differential structure is based on
differentiating polynomials. The coEilenberg-Moore category, in this case, is the opposite of the category of commutative rings.  The fact that this is a tangent category captures a fundamental aspect of both algebraic geometry \cite{hartshorne2013algebraic} and Synthetic Differential Geometry \cite{kock2006synthetic}.  That the coEilenberg-Moore category of a differential category is a tangent category in much more generality allows further significant examples. Not only can one dispense with the necessity 
of assuming {\em negatives\/}, but also with the necessity of having a {\em monoidal\/} coalgebra modality \cite{blute2015cartesian} or, equivalently, the Seely isomorphisms, $\oc(A \times B) \cong \oc(A) \otimes \oc(B)$ \cite{seely1987linear} (which the third author required in \cite{roryFMCS}).  Dispensing with the assumption of negatives allows one to generalize the example of commutative rings to commutative semirings \cite{golan2013semirings} and to consider examples from combinatorics and computer science. Dispensing with the assumption of a monoidal coalgebra modality/the Seely isomorphisms allows consideration of such examples as $C^\infty$-rings \cite{kock2006synthetic,moerdijk2013models} or Rota-Baxter algebras \cite{guo2012introduction}. When a coalgebra modality is monoidal, it will give rise, when sufficient limits are present, to a \emph{representable tangent category}. This means that the tangent functor is of the form $\_^D$ for an infinitesimal object $D$  and so has left adjoint $\_ \times D$. Two examples of such differential categories include the opposite category of vector spaces with the free commutative algebra modality (one of the original examples of a differential category in \cite{blute2006differential}), as well as the the category of vector spaces with the cofree cocommutative coalgebra modality (as studied by Clift and Murfet in \cite{clift2017cofree}). It is interesting to note that in both cases, the infinitesimal object is the ring of dual numbers over the field.  On the other hand, as the free $C^\infty$-ring modality is not monoidal, this provides an example of a non-representable tangent category that, nonetheless, has a tangent functor that is a right adjoint.

\noindent \textbf{Conventions:} This paper assumes a knowledge of basic category theory and of symmetric monoidal categories. We refer the reader to \cite{mac2013categories} if further details are needed on these topics. In this paper we shall use diagrammatic order for composition: explicitly, this means that the composite map $fg$ is the map that employs $f$ first and then $g$. Furthermore, to simplify working in symmetric monoidal categories, we will allow ourselves to work in symmetric \emph{strict} monoidal categories, and therefore we suppress the associator and unitor isomorphisms. Symmetric monoidal categories will be denoted by $(\mathbb{X}, \otimes, K, \tau)$ where $\mathbb{X}$ is the underlying category, $\otimes$ is the tensor product, $K$ is the monoidal unit, and $\tau_{A,B}: A \otimes B \to B \otimes A$ is the symmetry isomorphism.

\section{Tangent Categories}\label{tansec}

Tangent categories were introduced by Rosick{\'y} in \cite{rosicky1984abstract}, then later generalized and studied further by Cockett and Cruttwell in \cite{cockett2014differential}. This generalization, which replaced Abelian groups with commutative monoids, opened the door to examples of tangent categories from combinatorics and computer science where one does not expect to have negatives. The axioms of a tangent category abstract the essential properties of tangent bundles over smooth manifolds \cite{lee2009manifolds}. In this section we provide a brief overview of tangent categories, and refer the reader to \cite{cockett2014differential, garner2018embedding} for a more in-depth introduction. 

\begin{definition} Let $\mathbb{X}$ be a category. A \textbf{tangent structure} \cite{cockett2014differential} $\mathbb{T}$ on $\mathbb{X}$ is a sextuple $\mathbb{T} := (\mathsf{T}, p, \sigma, z, \ell, c)$ consisting of: 
\begin{itemize}
\item An endofunctor $\mathsf{T}: \mathbb{X} \to \mathbb{X}$;
\item A natural transformation $p_M: \mathsf{T}(M) \to M$, known as the \textbf{projection}, such that for each $M$ and each $n \in \mathbb{N}$, there is an $n$-th \textit{fibre power}\footnote{I.e, a fibered product \cite{mac2013categories} of $n$ instances of $p_M$} $\mathsf{T}_n(M)$ of $p_M$ (with projections $\rho_i: \mathsf{T}_n(M) \to \mathsf{T}(M)$), and this fibre power is preserved by $\mathsf{T}^m$ for each $m \in \mathbb{N}$. For each $n \in \mathbb{N}$, this induces a functor $\mathsf{T}_n: \mathbb{X} \to \mathbb{X}$ where by convention $\mathsf{T}_0 = 1_\mathbb{X}$ and $\mathsf{T}_1 = \mathsf{T}$. 
  \item Natural transformations ${\sigma_M: \mathsf{T}_2(M) \to \mathsf{T}(M)}$, known as the \textbf{sum operation on tangent vectors}, ${z_M: M \to \mathsf{T}(M)}$, known as the \textbf{zero vector field}, ${\ell_M: \mathsf{T}(M) \to \mathsf{T}^2(M)}$, known as the \textbf{vertical lift}, and $c_M: \mathsf{T}^2(M) \to \mathsf{T}^2(M)$, known as the \textbf{canonical flip};
\end{itemize}
and such that $p$, $\sigma$, $z$, $\ell$, and $c$ satisfy the various equational axioms found in \cite{cockett2014differential, garner2018embedding} and that for each $M$, the following diagram is an equalizer diagram \cite{mac2013categories}, known as the \textbf{universality of the vertical lift}: 
$$ \xymatrixcolsep{5pc}\xymatrix{ \mathsf{T}_2 (M) \ar[rr]^-{\langle \rho_0 z_{\mathsf{T}(M)}, \rho_1 \ell_M \rangle\mathsf{T}(\sigma_M)} && \mathsf{T}^2(M) \ar@<+.5ex>[r]^-{\mathsf{T}(p_M)} \ar@<-.5ex>[r]_-{p_{\mathsf{T}(M)} p_M z_M}
 & \mathsf{T}(M)} $$
where $\langle -, - \rangle$ is the pairing operation induced by the universal property of the pullback. 
\end{definition}

\begin{definition} A \textbf{tangent category} \cite{cockett2014differential} is a pair $(\mathbb{X}, \mathbb{T})$ consisting of a category $\mathbb{X}$ and a tangent structure $\mathbb{T}$ on $\mathbb{X}$. The fibre powers of $p$, together with the equalizer appearing in the axiom of universality of the vertical lift, are collectively referred to as the \textbf{tangent limits} \cite{garner2018embedding} of a tangent category. 
\end{definition}

We refer the reader to \cite{cockett2014differential} where the axioms of a tangent category are expressed in commutative diagrams. In \cite{leung2017classifying}, Leung defined an alternative axiomatization of a tangent category using Weil algebras; this was exploited by Garner in \cite{garner2018embedding} to provide a description of tangent categories as categories enriched in Weil spaces. 
\begin{example} \normalfont\label{tanex} Here are some well-known examples of tangent categories. Other examples of tangent categories can be found in \cite{cockett2014differential,garner2018embedding, cockett2016differential}. 
\begin{enumerate}
\item The canonical example of a tangent category is the category of finite-dimensional smooth manifolds, where for a manifold $M$, $\mathsf{T}(M)$ is the standard tangent bundle over $M$. 
\item \label{Cartex} Every Cartesian differential category \cite{blute2009cartesian} is a tangent category \cite[Proposition 4.7]{cockett2014differential}. In particular this implies that every categorical model of the differential $\lambda$-calculus \cite{bucciarelli2010categorical, manzonetto2012categorical} is a tangent category.
\item \label{Crigex} Let $k$ be a field, and let $\mathsf{CALG}_k$ be the category of commutative $k$-algebras. Then $\mathsf{CALG}_k$ is a tangent category where for a commutative $k$-algebra $A$, $\mathsf{T}(A) := A[\epsilon]$ is the ring of dual numbers over $A$, $A[\epsilon] = \lbrace a + b \epsilon \vert~ a,b \in A\rbrace$ with $\epsilon^2 = 0$. The projection is defined as ${p_A(a + b \epsilon) = a}$, and so ${\mathsf{T}_2(A) := A [\epsilon, \epsilon^\prime] =  \lbrace a + b \epsilon + c \epsilon^\prime \vert~ a,b,c \in A\rbrace}$ with $\epsilon^2 =  {\epsilon^\prime}^2 = \epsilon\epsilon^\prime = 0$. 
On the other hand, ${\mathsf{T}^2(A) := \lbrace a + b \epsilon_1 + c \epsilon_2 + d \epsilon_1\epsilon_2 \vert~ a,b,c,d \in A\rbrace}$ with $\epsilon_1^2 = \epsilon_2^2 = 0$. The remaining tangent structure is defined as follows: $\sigma_A(a + b\epsilon + c\epsilon^\prime) =  a + (b+c)\epsilon$, $z_A (a) = a$, ${\ell_A(a + b \epsilon) = a + b \epsilon_1\epsilon_2}$, and $c_A(a + b \epsilon_1 + c \epsilon_2 + d \epsilon_1\epsilon_2) = a + c \epsilon_1 + b \epsilon_2 + d \epsilon_1\epsilon_2$. We will generalize this example in the context of codifferential categories in Section \ref{EMCodiffsec}. In particular, this example generalizes to the category of commutative algebras over any commutative unital semiring.
\end{enumerate}
 \end{example}

Of particular importance to this paper is when the tangent functor has a left adjoint, which induces a tangent structure on the opposite category of the tangent category. 

\begin{theorem}\label{tanadj} \cite[Proposition 5.17]{cockett2014differential} Let $(\mathbb{X}, \mathbb{T})$ be a tangent category such that for each $n \in \mathbb{N}$, $\mathsf{T}_n: \mathbb{X} \to \mathbb{X}$ has a left adjoint $\mathsf{L}_n: \mathbb{X} \to \mathbb{X}$. Then $\mathbb{X}^{op}$ has a tangent structure with tangent functor $\mathsf{L} = \mathsf{L_1}$.  
\end{theorem}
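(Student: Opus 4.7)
The plan is to transport the tangent structure from $\mathbb{X}$ to $\mathbb{X}^{op}$ via the mate correspondence arising from the adjunctions $\mathsf{L}_n \dashv \mathsf{T}_n$ (and their composites $\mathsf{L}^m \dashv \mathsf{T}^m$, obtained by pasting adjunctions). Each structural natural transformation of $\mathbb{T}$ admits a mate of reversed variance: $p \colon \mathsf{T} \Rightarrow 1$ transposes to $p^\flat \colon 1 \Rightarrow \mathsf{L}$, $z \colon 1 \Rightarrow \mathsf{T}$ to $z^\flat \colon \mathsf{L} \Rightarrow 1$, $\sigma \colon \mathsf{T}_2 \Rightarrow \mathsf{T}$ to $\sigma^\flat \colon \mathsf{L} \Rightarrow \mathsf{L}_2$, $\ell \colon \mathsf{T} \Rightarrow \mathsf{T}^2$ to $\ell^\flat \colon \mathsf{L}^2 \Rightarrow \mathsf{L}$, and $c \colon \mathsf{T}^2 \Rightarrow \mathsf{T}^2$ to $c^\flat \colon \mathsf{L}^2 \Rightarrow \mathsf{L}^2$. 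Reversing these in $\mathbb{X}^{op}$ produces arrows of the correct variances to serve as projection, zero, sum, vertical lift and canonical flip for a candidate tangent structure on $\mathbb{X}^{op}$ with tangent functor $\mathsf{L}$.

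Next, I would identify the fiber powers of $p^{op}$ in $\mathbb{X}^{op}$ with the given $\mathsf{L}_n$. Equivalently, $\mathsf{L}_n(M)$ must be the $n$-fold pushout of $p^\flat_M \colon M \to \mathsf{L}(M)$ in $\mathbb{X}$; this is a Yoneda computation. For any $X$, the adjunction gives $\mathbb{X}(\mathsf{L}_n(M), X) \cong \mathbb{X}(M, \mathsf{T}_n(X))$, and by hypothesis the right-hand side is the $n$-fold fiber product of $\mathbb{X}(M, \mathsf{T}(X))$ over $\mathbb{X}(M, X)$; transposing each factor through $\mathsf{L} \dashv \mathsf{T}$, and tracking that $p_X$ corresponds to $p^\flat_M$, rewrites this hom-set as the $n$-fold fiber product of $\mathbb{X}(\mathsf{L}(M), X)$ over $\mathbb{X}(M, X)$, which by the Yoneda lemma exhibits the required pushout. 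The stability condition — that these fiber powers are preserved by iterates of the tangent functor in $\mathbb{X}^{op}$ — is then automatic in $\mathbb{X}$, since $\mathsf{L}^m$ is a composition of left adjoints and hence preserves all colimits.

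All the equational axioms relating $p, z, \sigma, \ell, c$ in Definition 2.1 transport to the same shape of equations among $p^\flat, z^\flat, \sigma^\flat, \ell^\flat, c^\flat$: the mate correspondence is compatible with vertical and horizontal pasting of natural transformations, so every diagram commuting in $\mathbb{X}$ yields its mate diagram commuting in $\mathbb{X}$, and the latter reverses in $\mathbb{X}^{op}$ to exactly the required tangent-category equation.

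The main obstacle is the universality of the vertical lift, which is an equalizer condition rather than an equation, and so is not transported by the formal mate correspondence alone. What must be shown is that the candidate equalizer in $\mathbb{X}^{op}$ built from the mates is indeed an equalizer, or equivalently that the dual diagram in $\mathbb{X}$ involving $\mathsf{L}(M)$, $\mathsf{L}^2(M)$, and $\mathsf{L}_2(M)$ is a coequalizer. I would handle this by a further Yoneda argument: apply $\mathbb{X}(-, X)$ to the candidate coequalizer, translate each hom-set via $\mathsf{L}_n \dashv \mathsf{T}_n$, and recognise the outcome as $\mathbb{X}(M, -)$ applied to the original vertical-lift equalizer at $X$ in $\mathbb{X}$. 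Since representables preserve limits, the original universality of the vertical lift yields an equalizer of sets, and Yoneda then transfers this back to the required coequalizer, completing the transfer of the tangent structure.
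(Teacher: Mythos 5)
The paper does not actually prove this statement---it is imported by citation from Cockett--Cruttwell \cite[Proposition 5.17]{cockett2014differential}---and your argument is correct and essentially reproduces the proof given there: transport $p,z,\sigma,\ell,c$ through the calculus of mates for the adjunctions $\mathsf{L}_n \dashv \mathsf{T}_n$ and $\mathsf{L}^m \dashv \mathsf{T}^m$, identify $\mathsf{L}_n(M)$ with the $n$-fold fibre power of the transposed projection in $\mathbb{X}^{op}$ by the adjointness/Yoneda computation, observe that $\mathsf{L}^m$ preserves these (co)limits as a composite of left adjoints, and recover the universality of the vertical lift by the same representability argument. Your explicit recognition that the equalizer axiom is the one piece not handled by the formal mate calculus, and your Yoneda treatment of it, is exactly the right decomposition.
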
 

See Example \ref{reptaneex}.\ref{Crigop} for an application of this theorem. In Section \ref{CoEMdiffsec} we will use Theorem \ref{tanadj} to obtain a tangent structure on the coEilenberg-Moore category of a differential category. 

We now turn our attention to \emph{representable tangent categories}, which briefly are tangent categories whose tangent functor is representable. Representable tangent categories are a very important kind of tangent category as they are closely related to synthetic differential geometry \cite{kock2006synthetic}. First recall that in a category $\mathbb{X}$ with binary products $\times$, an object $D$ is an \textbf{exponent object} if the functor $- \times D: \mathbb{X} \to \mathbb{X}$ has a right adjoint $(-)^D: \mathbb{X} \to \mathbb{X}$. A functor $\mathsf{F}: \mathbb{X} \to \mathbb{X}$ is \textbf{representable} if $\mathsf{F}(-) \cong (-)^D$ for some exponent object $D$, and $D$ is said to represent the functor $\mathsf{F}$. 

\begin{definition} A \textbf{representable tangent category} \cite{cockett2014differential} is a tangent category $(\mathbb{X}, \mathbb{T})$ such that $\mathbb{X}$ has finite products and for each $n \in \mathbb{N}$, $\mathsf{T}_n$ is a representable functor, that is, $\mathsf{T}_n(-) \cong (-)^{D_n}$ for some exponent object $D_n$. In the case of $n=1$, the object $D_1$ (which we denote simply as $D$) representing the tangent functor, $\mathsf{T}(-) \cong (-)^D$, is called the \textbf{infinitesimal object} \cite{cockett2014differential} of the representable tangent category $(\mathbb{X}, \mathbb{T})$. 
\end{definition}

Alternatively one can axiomatize representable tangent categories in terms of the infinitesimal object $D$, see \cite[Definition 5.6]{cockett2014differential}. Note that by definition, a representable functor has a left adjoint and therefore one can apply Theorem \ref{tanadj} to a representable tangent category. 

\begin{theorem}\label{tanadj2} \cite[Corollary 5.18]{cockett2014differential} Let $(\mathbb{X}, \mathbb{T})$ be a representable tangent category with infinitesimal object $D$. Then $\mathbb{X}^{op}$ has a tangent structure with tangent functor $- \times D$.   
\end{theorem}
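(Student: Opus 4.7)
The plan is to deduce Theorem \ref{tanadj2} as an immediate consequence of Theorem \ref{tanadj}. The only thing that needs to be checked is the hypothesis of that theorem, namely that each fibre-power functor $\mathsf{T}_n \colon \mathbb{X} \to \mathbb{X}$ admits a left adjoint. Once this is established, Theorem \ref{tanadj} gives us the tangent structure on $\mathbb{X}^{op}$ and identifies its tangent functor as $\mathsf{L}_1$, at which point one only has to recognize $\mathsf{L}_1$ as $- \times D$.

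First, I would unpack the representability hypothesis: by definition of a representable tangent category, for each $n \in \mathbb{N}$ there is an exponent object $D_n$ (with $D_1 = D$) and a natural isomorphism $\mathsf{T}_n(-) \cong (-)^{D_n}$. By the defining universal property of an exponent object, the functor $-\times D_n$ is left adjoint to $(-)^{D_n}$. Composing this adjunction with the natural isomorphism $\mathsf{T}_n(-) \cong (-)^{D_n}$ shows that $-\times D_n$ is also left adjoint to $\mathsf{T}_n$. Hence the assignment $\mathsf{L}_n := -\times D_n$ satisfies the hypothesis of Theorem \ref{tanadj}.

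Now I invoke Theorem \ref{tanadj}: since every $\mathsf{T}_n$ has a left adjoint, $\mathbb{X}^{op}$ carries a tangent structure whose tangent functor is $\mathsf{L} = \mathsf{L}_1$. Substituting $\mathsf{L}_1 = -\times D_1 = -\times D$ gives exactly the desired conclusion.

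There is essentially no obstacle here: the content and the hard work lie entirely inside Theorem \ref{tanadj} (transporting the tangent natural transformations across the adjunctions, verifying the axioms, and checking the universality of the vertical lift in $\mathbb{X}^{op}$). The only minor subtlety worth flagging explicitly is that the left adjoint obtained from an arbitrary natural isomorphism $\mathsf{T}_n \cong (-)^{D_n}$ is canonically $-\times D_n$; this is just uniqueness of adjoints up to natural isomorphism combined with the standard product-exponential adjunction in $\mathbb{X}$.
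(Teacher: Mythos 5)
Your proposal is correct and follows exactly the route the paper indicates: the remark immediately preceding the theorem observes that a representable functor has a left adjoint by definition, so Theorem \ref{tanadj} applies with $\mathsf{L}_n = - \times D_n$, and the tangent functor on $\mathbb{X}^{op}$ is $\mathsf{L}_1 = - \times D$. Your extra remark about uniqueness of adjoints is a reasonable point of care but does not change the argument.
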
 

\begin{example} \normalfont\label{reptaneex} We finish this section with some examples of representable tangent categories. 
\begin{enumerate}
\item Every tangent category embeds into a representable tangent category \cite{garner2018embedding}. 
\item The subcategory of infinitesimally and vertically linear objects of any model of synthetic differential geometry \cite{kock2006synthetic} is a representable tangent category with infinitesimal object $D = \lbrace x \in R \vert ~ x^2 = 0 \rbrace$, where $R$ is the line object \cite[Proposition 5.10]{cockett2014differential}. 
\item \label{Crigop} Let $k$ be a field. Recall that in $\mathsf{CALG}_k$, the categorical coproduct is given by the tensor product of $k$-vector spaces $\otimes$ which is therefore a product in $\mathsf{CALG}_k^{op}$. Then $\mathsf{CALG}_k^{op}$ is a representable tangent category with infinitesimal object $k[\epsilon]$, the ring of dual numbers over $k$. For a commutative $k$-algebra $A$, $A^{k[\epsilon]}$ (in $\mathsf{CALG}_k^{op}$) is defined as the symmetric $A$-algebra over the K{\" a}hler module of $A$ (see \cite[Proposition 5.16]{cockett2014differential} for full details). By applying Theorem \ref{tanadj2} to this example, one obtains precisely the tangent structure on $\mathsf{CALG}_k$ from Example \ref{tanex}.\ref{Crigex}, where in particular we note that $A[\epsilon] \cong A \otimes k[\epsilon]$. In Section \ref{CoEMdiffsec} we will generalize this example to the context of differential categories. We again note that this example generalizes to the category of commutative algebras over any commutative semiring. 
\end{enumerate}
\end{example}

\section{Coalgebra Modalities and their coEilenberg-Moore Categories}\label{coalgref}

In this section we review (co)algebra modalities and take a look at their (co)Eilenberg-Moore categories. Coalgebra modalities were introduced in the development of differential categories \cite{blute2006differential}, as a weakening of the notion of \textit{linear exponential comonad} that is required for a categorical model of the multiplicative and exponential fragment of linear logic ($\mathsf{MELL}$) \cite{bierman1995categorical,schalk2004categorical,mellies2009categorical}.  While the notion of a coalgebra modality is strictly weaker than that of a linear exponential comonad---which is precisely a \emph{monoidal} coalgebra modality \cite{blute2015cartesian}---coalgebra modalities provide a sufficient context in which to axiomatize differentiation. 

A \textbf{comonad} \cite{mac2013categories} on a category $\mathbb{X}$ will be denoted as a triple $(\oc, \delta, \varepsilon)$ with endofunctor ${\oc: \mathbb{X} \to \mathbb{X}}$ and natural transformations $\delta_A: \oc(A) \to \oc \oc(A)$ and $\varepsilon_A: \oc(A) \to A$. A \textbf{$\oc$-coalgebra} will be denoted as a pair $(A, \omega)$ with underlying object $A$ and $\oc$-coalgebra structure ${\omega: A \to \oc(A)}$. The category of $\oc$-coalgebras and $\oc$-coalgebra morphisms is called the \textbf{coEilenberg-Moore category} \cite{mac2013categories} of the comonad $(\oc, \delta, \varepsilon)$ and will be denoted $\mathbb{X}^\oc$. Coalgebra modalities are comonads such that every cofree $\oc$-coalgebra comes equipped with a natural cocommutative comonoid structure. 

\begin{definition}\label{coalgdef} A \textbf{coalgebra modality} \cite{blute2006differential} on a symmetric monoidal category is a quintuple $(\oc, \delta, \varepsilon, \Delta, \mathsf{e})$ consisting of a comonad $(\oc, \delta, \varepsilon)$ equipped with two natural transformations $\Delta_A: \oc (A) \to \oc (A) \otimes \oc (A)$ and $\mathsf{e}_A: \oc (A) \to K$ such that for each object $A$, $(\oc(A), \Delta_A, \mathsf{e}_A)$ is a cocommutative comonoid and $\delta_A$ is a comonoid morphism. 
\end{definition}

What can we say about the coEilenberg-Moore category of a coalgebra modality? It turns out that every $\oc$-coalgebra of a coalgebra modality comes equipped with a cocommutative comonoid structure \cite{blute2015derivations}. Indeed if $(A, \omega)$ is a $\oc$-coalgebra, then the triple $(A, \Delta^\omega, \mathsf{e}^\omega)$ is a cocommutative comonoid where $\Delta^\omega$ and $\mathsf{e}^\omega$ are defined as follows: 
$$\Delta^\omega := \xymatrix{A \ar[r]^-{\omega} & \oc(A) \ar[r]^-{\Delta_A} & \oc(A) \otimes \oc(A) \ar[r]^-{\varepsilon_A \otimes \varepsilon_A} & A \otimes A  
  } \quad \quad \quad  \mathsf{e}^\omega :=   \xymatrix{A \ar[r]^-{\omega} & \oc(A) \ar[r]^-{\mathsf{e}_A} & K} $$
 It is important to point out that $(A, \Delta^\omega, \mathsf{e}^\omega)$ is in general only a cocommutative comonoid in the base category $\mathbb{X}$ and not in the coEilenberg-Moore category $\mathbb{X}^\oc$, since the latter does not necessarily have a monoidal product. Also notice that since $\delta_A$ is a comonoid morphism, when applying this construction to a cofree $\oc$-coalgebra $(\oc(A), \delta_A)$ we recover $\Delta_A$ and $\mathsf{e}_A$, that is, $\Delta^{\delta_A}=\Delta_A$ and $\mathsf{e}^{\delta_A}=\mathsf{e}_A$. 
 
\begin{definition}\label{Seelydef} In a symmetric monoidal category with finite products $\times$ and terminal object $\mathsf{1}$, a coalgebra modality has \textbf{Seely isomorphisms} \cite{bierman1995categorical,blute2015cartesian,seely1987linear} if the map $\chi_{\mathsf{1}}: \oc (\mathsf{1}) \to K$ and natural transformation ${\chi: \oc(A \times B) \to \oc A \otimes \oc B}$ defined respectively as
$$\xymatrixcolsep{2pc}\xymatrix{  \oc(\mathsf{1}) \ar[r]^-{e} & K &\oc(A \times B) \ar[r]^-{\Delta} & \oc(A \times B) \otimes \oc(A \times B) \ar[rr]^-{\oc(\pi_0) \otimes \oc(\pi_1)} && \oc (A) \otimes \oc (B)
  }$$
are isomorphisms, so $\oc(\mathsf{1}) \cong K$ and $\oc(A \times B) \cong \oc (A) \otimes \oc (B)$.  
\end{definition}

Coalgebra modalities with Seely isomorphisms can equivalently be defined as \textbf{monoidal coalgebra modalities} \cite{bierman1995categorical,blute2015cartesian}, which are coalgebra modalities equipped with a natural transformation $\mathsf{m}_{A,B}: \oc(A) \otimes \oc(B) \to \oc(A \otimes B)$ and a map $\mathsf{m}_K: K \to  \oc(K)$ making $\oc$ a symmetric monoidal comonad such that $\Delta$ and $\mathsf{e}$ are both monoidal transformations and $\oc$-coalgebra morphisms. Furthermore for a monoidal coalgebra modality, the monoidal product of the base category becomes a finite product in the coEilenberg-Moore category \cite{schalk2004categorical}. Explicitly, the terminal object is the $\oc$-coalgebra $(K, \mathsf{m}_K)$ while the product of $\oc$-coalgebras $(A, \omega)$ and $(B, \omega^\prime)$ is $(A, \omega) \otimes (B, \omega^\prime) := (A \otimes B, (\omega \otimes \omega^\prime)\mathsf{m}_{A,B})$. 

\begin{example} \normalfont\label{coalgmodex} Here are some examples of coalgebra modalities. Many other examples of coalgebra modalities (with and without the Seely isomorphisms) can be found in \cite{blute2018differential}. 
\begin{enumerate}
\item There is no shortage of examples of coalgebra modalities since every categorical model of $\mathsf{MELL}$ admits a coalgebra modality which has the Seely isomorphism. For example, Hyland and Schalk provide a nice list of such examples in \cite[Section 2.4]{hyland2003glueing}. 
\item \label{cocomex} Let $k$ be a field and let $\mathsf{VEC}_k$ be the category of $k$-vector spaces, which is a symmetric monoidal category with respect to the standard tensor product of $k$-vector spaces. For every $k$-vector space $V$, there exists a cofree cocommutative $k$-coalgebra \cite{sweedler1969hopf} over $V$, denoted $\oc(V)$, where a detailed construction can be found in \cite{clift2017cofree, hyland2003glueing, sweedler1969hopf}. In particular, if $k$ has characteristic $0$ \footnote{In \cite{clift2017cofree}, Clift and Murfet work, for simplicity, with an algebraically closed field of characteristic $0$.  However, as they point out, the assumption that the field is algebraically closed is not necessary in the construction.} and if $X=\lbrace x_i \mid i \in I\}$ is a basis of $V$, then  $\oc(V) \cong \bigoplus \limits_{v\in V} k[X]$ as $k$-coalgebras (where $k[X]$ is the polynomial ring over $k$ generated by the set $X$). This induces a coalgebra modality $\oc$ on $\mathsf{VEC}_k$ which furthermore has the Seely isomorphisms ($\oc(V \times W) \cong \oc(V) \otimes \oc(W)$ and $\oc(\mathsf{0}) \cong k$), and by \cite{porst2006corings} we know that the coEilenberg-Moore category of $\oc$ is isomorphic to the category of cocommutative $k$-coalgebras (which are the cocommutative comonoids in $\mathsf{VEC}_k$).  By applying results in \cite{porst2006corings}, one can generalize this example to the category of modules over an arbitrary commutative unital ring.
\end{enumerate}
\end{example}
 
The dual notion of a coalgebra modality is an algebra modality. Since we will be working with algebra modalities in Section \ref{EMCodiffsec}, we provide the definition of an algebra modality in detail. A \textbf{monad} \cite{mac2013categories}, the dual notion a comonad, on a category $\mathbb{X}$ will be denoted as a triple $(\mathsf{S}, \mu, \eta)$ consisting of an endofunctor $\mathsf{S}: \mathbb{X} \to \mathbb{X}$ and natural transformations $\mu_A: \mathsf{S}^2(A) \to \mathsf{S}(A)$ and $\eta_A: \mathsf{S}(A) \to A$. An $\mathsf{S}$-algebra will be denoted as a pair $(A, \nu)$ with underlying object $A$ and structure map $\nu: \mathsf{S}(A) \to A$. The category of $\mathsf{S}$-algebras and $\mathsf{S}$-algebra morphisms is called the \textbf{Eilenberg-Moore category} \cite{mac2013categories} of the monad $(\mathsf{S}, \mu, \eta)$ and is denoted $\mathbb{X}^\mathsf{S}$. 

  \begin{definition} An \textbf{algebra modality} \cite{blute2015derivations} on a symmetric monoidal category is a quintuple $(\mathsf{S}, \mu, \eta,$ $\nabla, \mathsf{u})$ consisting of a monad $(\mathsf{S}, \mu, \eta)$ equipped with two natural transformations $\nabla_A: \mathsf{S}(A) \otimes \mathsf{S}(A) \to \mathsf{S}(A)$ and $\mathsf{e}: K \to \mathsf{S}(A)$ such that for each object $A$, $(\mathsf{S}(A), \nabla_A, \mathsf{u}_A)$ is a commutative monoid and $\mu_A$ is a monoid morphism. 
\end{definition}

Since algebra modalities are dual to coalgebra modalities, it follows that every $\mathsf{S}$-algebra comes equipped with a commutative monoid structure \cite{blute2015derivations}, which we again point out is in the base category and not the Eilenberg-Moore category. Explicitly, given an $\mathsf{S}$-algebra $(A, \nu)$ of an algebra modality $(\mathsf{S}, \mu, \eta, \nabla, \mathsf{u})$, the triple $(A, \nabla^\nu, \mathsf{u}^\nu)$ is a commutative monoid where $\nabla^\omega$ and $\mathsf{u}^\omega$ are defined as follows: 
$$ \nabla^\nu := \xymatrixcolsep{2pc}\xymatrix{A \otimes A \ar[r]^-{\eta_A \otimes \eta_A} & \mathsf{S}(A) \otimes \mathsf{S}(A) \ar[r]^-{\nabla_A} & \mathsf{S}(A) \ar[r]^-{\nu_A} & A}  \quad \quad \mathsf{u}^\nu :=\xymatrixcolsep{2pc}\xymatrix{K \ar[r]^-{\mathsf{u}_A} & \mathsf{S}(A) \ar[r]^-{\nu} & A
  } $$ 
Dual to coalgebra modalities with the Seely isomorphisms, in the case of a symmetric monoidal category with finite coproducts $\oplus$ and initial object $\mathsf{0}$, if the algebra modality has the Seely isomorphisms, i.e. $\mathsf{S}(\mathsf{0}) \cong K$ and $\mathsf{S}(A \oplus B) \cong \mathsf{S}(A) \otimes \mathsf{S}(B)$, then the monoidal product becomes a coproduct in Eilenberg-Moore category. 

\begin{example} \normalfont\label{algmodex} Here are some examples of algebra modalities. Many other examples of algebra modalities (with and without the Seely isomorphisms) can be found in \cite{blute2018differential}. 
\begin{enumerate}
\item \label{Symex} Let $k$ be a field. In analogy with Example \ref{coalgmodex}.\ref{cocomex}, for every $k$-vector space $V$ there exists a free commutative $k$-algebra over $V$, denoted $\mathsf{Sym}(V)$, which is also called the symmetric algebra over $V$ \cite{lang2002algebra}. In particular, if $X=\lbrace x_i \mid i \in I\}$ is a basis of $V$, then  $\mathsf{Sym}(V) \cong k[X]$ as $k$-algebras (where $k[X]$ is the polynomial $k$-algebra generated by the set $X$). This induces an algebra modality $\mathsf{Sym}$ on $\mathsf{VEC}_k$ which furthermore has the Seely isomorphisms (so that $\mathsf{Sym}(V \times W) \cong \mathsf{Sym}(V) \otimes \mathsf{Sym}(W)$ and $\mathsf{Sym}(\mathsf{0}) \cong k$) and whose Eilenberg-Moore category is isomorphic to the category of commutative $k$-algebras (which are the commutative monoids in $\mathsf{VEC}_k$). This example generalizes to the category of modules over an arbitrary commutative unital semiring.  
\item \label{Cex} Let $\mathbb{R}$ be the field of real numbers. $C^\infty$-rings \cite{kock2006synthetic,moerdijk2013models} are defined as the algebras of the Lawvere theory whose morphisms are smooth maps between Cartesian spaces $\mathbb{R}^n$, so a $C^\infty$-ring can be defined equivalently as a set $A$ equipped with a family of functions $\Phi_f: A^n \to A$ indexed by the smooth functions $f: \mathbb{R}^n \to \mathbb{R}$, satisfying certain equations. For example, if $M$ is a smooth manifold, then $C^\infty(M) = {\lbrace f: M \to \mathbb{R} \vert~ f \text{ is smooth} \rbrace}$ is a $C^\infty$-ring where for a smooth map $f: \mathbb{R}^n \to \mathbb{R}$, $\Phi_f: C^\infty(M)^n  \to C^\infty(M)$ is defined by post-composition by $f$. Every $C^\infty$-ring is a commutative $\mathbb{R}$-algebra and for every $\mathbb{R}$-vector space $V$ there exists a free $C^\infty$-ring over $V$ \cite[Theorem 3.3]{kainz1987c}\cite{cruttwell2019integral}, denoted as $\mathsf{S}^\infty(V)$. This induces an algebra modality $\mathsf{S}^\infty$ on $\mathsf{VEC}_\mathbb{R}$ \cite{cruttwell2019integral}, where in particular for a finite dimensional vector space $V$ of dimension $n$, one has that $\mathsf{S}^\infty(V) \cong C^\infty(\mathbb{R}^n)$. The Eilenberg-Moore category of $\mathsf{S}^\infty$ is the category of $C^\infty$-rings \cite{kock2006synthetic,moerdijk2013models,cruttwell2019integral}. It is important to note that this is an example of an algebra modality which does NOT have the Seely isomorphisms. However, $C^\infty$-rings are mathematically important, as they provide the basis for well-adapted models of synthetic differential geometry \cite{kock2006synthetic,moerdijk2013models} and provide a natural setting for the adaptation of algebro-geometric methods to a smooth context. 
\end{enumerate}
\end{example}

\section{Differential Categories}\label{diffsec}

In this section we review (co)differential categories and, in particular, we will take a look at some well-known examples that correspond to differentiating polynomials and smooth maps. Differential categories were introduced by Blute, Cockett, and Seely \cite{blute2006differential} to provide the categorical semantics of differential linear logic \cite{ehrhard2017introduction}. While a codifferential category is simply the dual of a differential category, we provide full definitions of each since we will be working with codifferential categories in Section \ref{EMCodiffsec} and in the appendix, and we will be working with differential categories in Section \ref{CoEMdiffsec}. 

Two of the basic properties of the derivative from classical differential calculus require addition (or at least the number $0$): the Leibniz rule and the constant rule. Therefore we must first discuss additive structure. Here we mean ``additive'' in the sense of \cite{blute2006differential}, that is, enriched over commutative monoids. In particular this definition does not assume negatives nor does it assume biproducts, so this differs from other definitions of an additive category such as in \cite{mac2013categories}. That said, in Section \ref{EMCodiffsec} and Section \ref{CoEMdiffsec} we will be working with (co)differential categories with biproducts. 

\begin{definition}\label{AC} An \textbf{additive category} is a category enriched in commutative monoids, that is, a category in which each hom-set is a commutative monoid, with addition operation $+$ and zero $0$, and in which composition preserves the additive structure in the sense that $k(f+g)h=(kfh)+(kgh)$ and $0f=0=f0$. An \textbf{additive symmetric monoidal category} \cite{blute2006differential} is a symmetric monoidal category that is also an additive category such that the monoidal product $\otimes$ is compatible with the additive structure in the sense that $k \otimes (f+g)\otimes h=  k \otimes f\otimes h + k \otimes g\otimes h$ and $f \otimes 0\otimes g=0$. 
\end{definition}

Every category with finite biproducts is an additive category, and
finite (co)products in an additive category are automatically finite biproducts. In fact, every additive category can be completed to a category with biproducts \cite{mac2013categories}, where the completion is also an additive category, and similarly every additive symmetric monoidal category can be completed to an additive symmetric monoidal category with distributive biproducts. For this reason it is possible to argue (as in \cite{fiore2007differential}) that one might as well always assume one has biproducts.  However, it is important to bear in mind that only monoidal coalgebra modalities are guaranteed to lift to the biproduct completion.  Thus, for a treatment of arbitrary coalgebra modalities this assumption cannot be made (see \cite{blute2018differential} for more details). 

\begin{definition}\label{diffdef} A \textbf{differential category} \cite{blute2006differential} is an additive symmetric monoidal category with a coalgebra modality $(\oc, \delta, \varepsilon, \Delta, \mathsf{e})$ that comes equipped with a \textbf{deriving transformation}, that is, a natural transformation $\mathsf{d}_A: \oc (A) \otimes A \to \oc(A)$ such that the following equalities hold: \\
\textbf{[d.1]} Constant Rule: $\mathsf{d}_A\mathsf{e}_A= 0$ \\
\textbf{[d.2]} Leibniz Rule: $\mathsf{d}_A\Delta_A =(\Delta_A \otimes 1_{\oc(A)})(1_{\oc(A)} \otimes \tau_{\oc(A),A})(\mathsf{d}_A \otimes 1_{\oc(A)}) + (\Delta_A \otimes 1_{\oc(A)})(1_{\oc(A)} \otimes \mathsf{d}_A)$ \\
\textbf{[d.3]} Linear Rule: $\mathsf{d}_A\varepsilon_A=\mathsf{e}_A \otimes 1_A$ \\
\textbf{[d.4]} Chain Rule: $\mathsf{d}_A\delta_A=(\Delta_A \otimes 1_A)(\delta_A \otimes \mathsf{d}_A)\mathsf{d}_{\oc(A)}$ \\
\textbf{[d.5]} Interchange Rule\footnote{It should be noted that the interchange rule {\bf [d.5]} was not part of the definition in \cite{blute2006differential} but was later added to ensure that the coKleisli category of a differential category (with finite products) was a Cartesian differential category \cite{blute2009cartesian}.}: $(\mathsf{d}_A \otimes 1_A)\mathsf{d}_A=(1 \otimes \tau_{A,A})(\mathsf{d}_A \otimes 1_A)\mathsf{d}_A$ 
\end{definition}

CoKleisli maps of coalgebra modalities, that is, maps of type $f: \oc(A) \to B$, are to be thought of as \emph{smooth} maps from $A$ to $B$ as they are, in a certain sense, infinitely differentiable. Indeed the \textit{derivative} of $f: \oc(A) \to B$ is the map $\mathsf{D}[f]: \oc (A) \otimes A \to B$, defined as the composite $\mathsf{D}[f]:= \mathsf{d}_A f$. The constant rule {\bf [d.1]} amounts to the statement that the derivative of a constant map is zero. The second axiom {\bf [d.2]} is the analogue of the classical Leibniz rule in differential calculus. For the third axiom, a subclass of smooth maps are the \emph{linear} maps, which are coKleisli maps of the form $\varepsilon_A g: \oc (A) \to B$ for some map $g: A \to B$. Then the linear rule {\bf [d.3]} says that the derivative of a linear map is `constant' with respect to the point at which it is taken.  The fourth axiom {\bf [d.4]} is the chain rule regarding composition in the coKleisli category. The interchange rule {\bf [d.5]}, is the independence of order of differentiation, which, naively put, says that differentiating with respect to $x$ then $y$ is the same as differentiation with respect to $y$ then $x$. For more details and for string diagram representation of the axioms of a differential category, we refer the reader to \cite{blute2006differential, blute2018differential}. 

\begin{definition} A \textbf{differential storage category} \cite{blute2006differential} is a differential category with finite biproducts whose coalgebra modality has the Seely isomorphisms. 
\end{definition}

For differential storage categories, the differential category structure can equivalently be axiomatized by a natural transformation ${\eta_A: A \to \oc(A)}$ known as a \textbf{codereliction} \cite{blute2006differential,blute2018differential,fiore2007differential}. For monoidal coalgebra modalities, there is a bijective correspondence between coderelictions and deriving transformations \cite{blute2018differential}. However, we will see below that the deriving transformation plays the more important role when discussing tangent category structure of (co-)Eilenberg-Moore categories of differential categories. 

\begin{example} \normalfont\label{diffex} Here are some examples of differential storage categories. Many other examples of differential (storage) categories can be found in \cite{blute2006differential,blute2018differential,ehrhard2017introduction}. 
\begin{enumerate}
\item In \cite{blute2010convenient}, Blute, Ehrhard, and Tasson showed that the category of convenient vector spaces and bounded linear maps between them is a differential storage category. In particular, the coKleisli category is precisely the category of convenient vector spaces and \emph{smooth} maps between them. The differential structure is induced by the limit definition of the derivative in locally convex vector spaces. 
\item  In \cite{ehrhard2017introduction}, Ehrhard provides a differential storage category (amongst others) whose objects are \textit{linearly topologized} vector spaces generated by finiteness spaces. The differential structure corresponds to differentiating multivariable power series. 
\item \label{cofreediffex} In \cite{clift2017cofree}, Clift and Murfet study the differential category structure induced by cofree cocommutative coalgebras. So if $k$ is a field of characteristic $0$, $\mathsf{VEC}_k$ is a differential storage category with the coalgebra modality $\oc$ defined in Example \ref{coalgmodex}.\ref{cocomex}. Recalling that for a vector space $V$ with basis $X$, $\oc(V) \cong \bigoplus \limits_{v\in V} k[X]$, the deriving transformation can be expressed as:
$$\mathsf{d}_V: \left(\bigoplus \limits_{v\in V} k[X] \right) \otimes V \to \bigoplus \limits_{v\in V} k[X]  \quad \quad   p_v(x_1, \hdots, x_n) \otimes x_i \mapsto p_v(x_1, \hdots, x_n)x_i $$
where $p_v(x_1, \hdots, x_n)$ is a polynomial in distinct indeterminates $x_1,x_2,...,x_n \in X$ and lies in the $v$-th coproduct-component of $\oc V$, where $v \in V$ . This example also generalizes for modules over a commutative unital semiring. 
\end{enumerate}
\end{example}

\begin{definition}\label{codiffdef} A \textbf{codifferential category} \cite{blute2015derivations} is an additive symmetric monoidal category with an algebra modality $(\mathsf{S}, \mu, \eta, \nabla, \mathsf{u})$ that comes equipped with a \textbf{deriving transformation}\footnote{As in the literature, we keep the same terminology and notation for a deriving transformation in the context of a codifferential category}, that is, a natural transformation $\mathsf{d}_A: \mathsf{S}(A) \to \mathsf{S}(A) \otimes A$ such that dual equalities of \textbf{[d.1]} to \textbf{[d.5]} hold, that is: \\
\textbf{[cd.1]} Constant Rule: $\mathsf{u}_A\mathsf{d}_A= 0$ \\
\textbf{[cd.2]} Leibniz Rule: 
$$\nabla_A\mathsf{d}_A =(\mathsf{d}_A \otimes 1_{\mathsf{S}(A)})(1_{\mathsf{S}(A)} \otimes \tau_{A,\mathsf{S}(A)})(\nabla_A \otimes 1_{\mathsf{S}(A)})+ (1_{\mathsf{S}(A)} \otimes \mathsf{d}_A)(\nabla_A \otimes 1_{\mathsf{S}(A)})$$
\textbf{[cd.3]} Linear Rule: $\eta_A\mathsf{d}_A=\mathsf{u}_A \otimes 1_A$ \\
\textbf{[cd.4]} Chain Rule: $\mu_A\mathsf{d}_A=\mathsf{d}_{\mathsf{S}(A)}(\mu_A \otimes \mathsf{d}_A)(\nabla_A \otimes 1_A)$ \\
\textbf{[cd.5]} Interchange Rule: $\mathsf{d}_A(\mathsf{d}_A \otimes 1_A)=\mathsf{d}_A(\mathsf{d}_A \otimes 1_A)(1 \otimes \tau_{A,A})$ 
\end{definition}

\begin{example} \normalfont\label{codiffex} Here are some examples of codifferential categories. Many other examples of codifferential categories can be found in \cite{blute2006differential,blute2018differential}. 
 \begin{enumerate}
\item \label{MODdiffex} Let $k$ be a field. Then $\mathsf{VEC}_k$ is a codifferential storage category with the algebra modality $\mathsf{Sym}$ defined in Example \ref{algmodex}.\ref{Symex}, and where the differential structure corresponds precisely to differentiation of polynomials. To see this, recall that for a $k$-vector space $V$ with basis set $X$, $\mathsf{Sym}(V) \cong k[X]$. Therefore the deriving transformation can be expressed as $\mathsf{d}_V: k[X] \to k[X] \otimes V$, which is given by taking a sum involving the partial derivatives:
$$\mathsf{d}_V: k[X] \to k[X] \otimes V  \quad \quad \quad p(x_1, \hdots, x_n) \mapsto \sum_{i=1}^{n} \frac{\partial p}{\partial x_i}(x_1, \hdots, x_n) \otimes x_i $$
So $\mathsf{VEC}^{op}_k$ is a differential storage category. See \cite{blute2006differential,blute2018differential} for full details on this example. This example generalizes to the category of modules over a commutative unital semiring. 
\item \label{Cinfdiffex} Other than the codifferential category structure given by $\mathsf{Sym}$ from the previous example, $\mathsf{VEC}_\mathbb{R}$ also has a codifferential category structure with respect to the algebra modality $\mathsf{S}^\infty$ defined in Example \ref{algmodex}.\ref{Cex}. The deriving transformation is induced by differentiating smooth functions.  In particular for $\mathbb{R}^n$, $\mathsf{S}^\infty(\mathbb{R}^n)= C^\infty(\mathbb{R}^n)$ and the deriving transformation $\mathsf{d}_{\mathbb{R}^n}: C^\infty(\mathbb{R}^n) \to C^\infty(\mathbb{R}^n) \otimes \mathbb{R}^n$ is defined as a sum involving the partial derivatives: 
$$\mathsf{d}_{\mathbb{R}^n}: C^\infty(\mathbb{R}^n) \to C^\infty(\mathbb{R}^n) \otimes \mathbb{R}^n  \quad \quad \quad f \mapsto \sum_{i=1}^{n} \frac{\partial f}{\partial x_i} \otimes x_i $$
Hence $\mathsf{VEC}^{op}_\mathbb{R}$ is a differential category. See \cite{cruttwell2019integral} for full details on this example.
\end{enumerate}
\end{example}

\section{Tangent Structure and Codifferential Categories} \label{EMCodiffsec}

The goal of this section is to prove that the Eilenberg-Moore category of a codifferential category with finite biproducts is a tangent category. To achieve this we need to first introduce the concept of a \emph{tangent monad}, in order to lift tangent structure to Eilenberg-Moore categories. 

Let $(\mathsf{S}, \mu, \eta)$ be a monad on a category $\mathbb{X}$ and let $\mathsf{T}: \mathbb{X} \to \mathbb{X}$ be an endofunctor. Recall that a \textbf{distributive law} \cite{wisbauer2008algebras} of $\mathsf{T}: \mathbb{X} \to \mathbb{X}$ over $(\mathsf{S}, \mu, \eta)$ is a natural transformation $\lambda_M: \mathsf{S}(\mathsf{T}(M)) \to \mathsf{T}(\mathsf{S}(M))$ such that the following diagrams commute:  
$$ \xymatrixcolsep{1.5pc}\xymatrixrowsep{2pc}\xymatrix{\mathsf{S}^2\mathsf{T}(M) \ar[d]_-{\mu_{\mathsf{T}(M)}} \ar[r]^-{\mathsf{S}(\lambda_M)} &   \mathsf{S}\mathsf{T}\mathsf{S}(M)  \ar[r]^-{\lambda_{\mathsf{S}(M)}} &   \mathsf{T}\mathsf{S}^2(M) \ar[d]^-{\mathsf{T}(\mu_M)} & \mathsf{T}(M)\ar[dr]_-{\mathsf{T}(\eta_M)} \ar[r]^-{\eta_{\mathsf{T}(M)}} & \mathsf{S}\mathsf{T}(M) \ar[d]^-{\lambda_M} \\
\mathsf{S}\mathsf{T}(M) \ar[rr]_-{\lambda_M} && \mathsf{T}\mathsf{S}(M) && \mathsf{T}\mathsf{S}(M) 
  } $$
Distributive laws of this sort allow us to lift $\mathsf{T}$ to the Eilenberg-Moore category of $(\mathsf{S}, \mu, \eta)$ \cite{wisbauer2008algebras}, noting that this is an instance of a more general result of Appelgate that is stated in \cite{johnstone1975adjoint_lifting}. Explicitly, the endofunctor $\overline{\mathsf{T}}: \mathbb{X}^\mathsf{S} \to \mathbb{X}^\mathsf{S}$, called the lifting of $\mathsf{T}$, is defined on objects by $\overline{\mathsf{T}}(A, \xymatrixcolsep{1pc}\xymatrix{\mathsf{S}(A) \ar[r]^-{\nu} & A}) := (\mathsf{T}(A), \xymatrixcolsep{1pc}\xymatrix{\mathsf{S} \mathsf{T}(A) \ar[r]^-{\lambda_A} & \mathsf{T}\mathsf{S}(A) \ar[r]^-{\mathsf{T}(\nu)} & \mathsf{T}(A)} \!\!)$ and on maps by  
$\overline{\mathsf{T}}(f) := \mathsf{T}(f)$. 
\begin{definition}\label{tanmonaddef} A \textbf{tangent monad} on a tangent category $(\mathbb{X}, \mathbb{T})$ is a quadruple $(\mathsf{S}, \eta, \mu, \lambda)$ consisting of a monad $(\mathsf{S}, \eta, \mu)$ equipped with a distributive law $\lambda$ of the tangent functor $\mathsf{T}$ over $(\mathsf{S}, \mu, \delta)$ such that the following diagrams commute: 
$$ \xymatrixcolsep{1.5pc}\xymatrixrowsep{2pc}\xymatrix{ & \mathsf{S}\mathsf{T}(M) \ar[d]_-{\lambda_M} \ar[dr]^-{\mathsf{S}(p_M)} && \mathsf{S}\mathsf{T}_2(M) \ar[r]^-{\mathsf{S}(\sigma_M)} \ar[d]|-{\langle \mathsf{S}(\rho_0)\lambda, \mathsf{S}(\rho_1)\lambda \rangle } & \mathsf{S}\mathsf{T}(M) \ar[d]^-{\lambda_M} & \mathsf{S}(M) \ar[dr]_-{z_{\mathsf{S}(M)}} \ar[r]^-{\mathsf{S}(z_M)} & \mathsf{S}\mathsf{T}(M) \ar[d]^-{\lambda_M} \\
& \mathsf{T}\mathsf{S}(M) \ar[r]_-{p_{\mathsf{S}(M)}} & \mathsf{S}(M) & \mathsf{T}_2\mathsf{S}(M) \ar[r]_-{\sigma_{\mathsf{S}(M)}} & \mathsf{T}\mathsf{S}(M) & & \mathsf{T}\mathsf{S}(M)} $$
 $$ \xymatrixcolsep{1.5pc}\xymatrixrowsep{2pc}\xymatrix{ \mathsf{S}\mathsf{T}(M) \ar[dd]_-{\lambda_M} \ar[r]^-{\mathsf{S}(\ell_M)} & \mathsf{S}\mathsf{T}^2(M) \ar[d]^-{\lambda_{\mathsf{T}(M)}} && \mathsf{S}\mathsf{T}^2(M) \ar[d]_-{\lambda_{\mathsf{T}(M)}} \ar[r]^-{\mathsf{S}(c_M)} & \mathsf{S}\mathsf{T}^2(M) \ar[d]^-{\lambda_{\mathsf{T}(M)}} \\
 & \mathsf{T}\mathsf{S}\mathsf{T}(M) \ar[d]^-{\mathsf{T}(\lambda_M)} && \mathsf{T}\mathsf{S}\mathsf{T}(M) \ar[d]_-{\mathsf{T}(\lambda_M)} & \mathsf{T}\mathsf{S}\mathsf{T}(M) \ar[d]^-{\mathsf{T}(\lambda_M)}   \\
\mathsf{T}\mathsf{S}(M) \ar[r]_-{\ell_{\mathsf{S}(M)}} & \mathsf{T}^2\mathsf{S}(M) && \mathsf{T}^2\mathsf{S}(M) \ar[r]_-{c_{\mathsf{S}(M)}} & \mathsf{T}^2\mathsf{S}(M)
  }$$
  \end{definition}
 
Equivalently a tangent monad is a monad in the 2-category of tangent categories \cite{garner2018embedding} since the above diagrams imply that $(\mathsf{S}, \lambda)$ is a tangent morphism \cite{cockett2014differential}. 

\begin{proposition}\label{tanlift} The Eilenberg-Moore category of a tangent monad is a tangent category such that the forgetful functor preserves the tangent structure strictly. 
\end{proposition}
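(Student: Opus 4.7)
The plan is to bootstrap off the classical lifting theorem for distributive laws. Given a tangent monad $(\mathsf{S},\mu,\eta,\lambda)$ on $(\mathbb{X},\mathbb{T})$, the distributive law $\lambda$ already provides, as recalled just before Definition~\ref{tanmonaddef}, a lifting $\overline{\mathsf{T}}\colon\mathbb{X}^{\mathsf{S}}\to\mathbb{X}^{\mathsf{S}}$ with $U\overline{\mathsf{T}}=\mathsf{T}U$, where $U\colon\mathbb{X}^{\mathsf{S}}\to\mathbb{X}$ is the forgetful functor. By pasting the two hexagonal distributive law axioms, the iterates $\mathsf{T}^m$ lift similarly to $\overline{\mathsf{T}}^m$ via the composite distributive laws $\mathsf{T}^{m-1}(\lambda)\cdots\lambda_{\mathsf{T}^{m-1}}$.

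Next I would lift each component of $\mathbb{T}$ in turn. The five compatibility squares in Definition~\ref{tanmonaddef} are exactly the assertions that $p$, $\sigma$, $z$, $\ell$, and $c$ are natural transformations between the relevant composites \emph{after} the distributive laws are inserted---that is, each is a morphism of $\mathsf{S}$-algebras when its source and target are regarded as lifted objects. Thus $p$ lifts to $\overline{p}\colon\overline{\mathsf{T}}\Rightarrow 1_{\mathbb{X}^{\mathsf{S}}}$, $z$ lifts to $\overline{z}\colon 1_{\mathbb{X}^{\mathsf{S}}}\Rightarrow\overline{\mathsf{T}}$, $\ell$ lifts to $\overline{\ell}\colon\overline{\mathsf{T}}\Rightarrow\overline{\mathsf{T}}^2$, and $c$ lifts to $\overline{c}\colon\overline{\mathsf{T}}^2\Rightarrow\overline{\mathsf{T}}^2$. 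For $\sigma$ one first needs to produce the lifted fibre powers $\overline{\mathsf{T}}_n$; see the next paragraph.

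The real content lies in producing the tangent limits on $\mathbb{X}^{\mathsf{S}}$. Here I would invoke the fundamental fact that $U\colon\mathbb{X}^{\mathsf{S}}\to\mathbb{X}$ is monadic and hence creates all limits that exist in $\mathbb{X}$. Applied to the pullbacks defining $\mathsf{T}_n(M)$ and to the equalizer expressing universality of the vertical lift, this yields the corresponding tangent limits in $\mathbb{X}^{\mathsf{S}}$ on every $(A,\nu)$, together with their preservation by $U$. Preservation by $\overline{\mathsf{T}}^m$ then follows from the identity $U\overline{\mathsf{T}}^m=\mathsf{T}^mU$ together with the fact that $\mathsf{T}^m$ preserves these limits in $\mathbb{X}$ and $U$ reflects them. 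With the fibre powers in hand, the universal property gives us the lifted projection $\overline{\rho_i}$ and, from the middle square of Definition~\ref{tanmonaddef}, the sum operation $\overline{\sigma}$.

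Finally, the equational axioms of a tangent structure---those not involving limits---are equalities of natural transformations; since $U$ is faithful and maps each lifted component to its base counterpart, these axioms hold on $\mathbb{X}^{\mathsf{S}}$ because they hold on $\mathbb{X}$. Strict preservation by $U$ holds by construction on every piece of data ($U\overline{\mathsf{T}}=\mathsf{T}U$, $U(\overline{p})=pU$, and so on). I expect the only subtle step to be the careful bookkeeping for $\sigma$: one must verify that the candidate lifted pairing $\langle\mathsf{S}(\rho_0)\lambda,\mathsf{S}(\rho_1)\lambda\rangle$ appearing in Definition~\ref{tanmonaddef} indeed coincides with the algebra structure that $U$ creates on $\overline{\mathsf{T}}_2(A,\nu)$, so that $\overline{\sigma}$ is a morphism between the \emph{correct} $\mathsf{S}$-algebras; but this is a direct unfolding of the universal property of pullbacks and the naturality of $\lambda$.
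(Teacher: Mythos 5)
Your proposal is correct and follows essentially the same route as the paper's proof: lift $\mathsf{T}$ via the distributive law, define each of $p,\sigma,z,\ell,c$ componentwise (the tangent monad squares plus naturality giving the $\mathsf{S}$-algebra morphism property), obtain the tangent limits from the fact that the monadic forgetful functor creates limits, and observe that the equational axioms and strict preservation hold by construction. You supply a few details the paper leaves implicit (lifting $\mathsf{T}^m$ by composite distributive laws, preservation of the lifted fibre powers via $U$ reflecting limits, and the check that $\overline{\sigma}$ lands between the correct algebras), but the argument is the same.
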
 
\begin{proof} Let $(\mathsf{S}, \eta, \mu, \lambda)$ be a tangent monad on a tangent category $(\mathbb{X}, \mathbb{T})$. Since $\lambda$ is a distributive law, this induces a lifting functor $\overline{\mathsf{T}}: \mathbb{X}^\mathsf{S} \to \mathbb{X}^\mathsf{S}$. Then we can define $\overline{p}_{(A,\nu)} := p_A$, $\overline{z}_{(A,\nu)} := z_A$, $\overline{\ell}_{(A,\nu)} := \ell_A$, $\overline{c}_{(A,\nu)} := c_A$. That $\overline{p}$, $\overline{z}$, $\overline{\ell}$, and $\overline{c}$ are all $\mathsf{S}$-algebra morphisms follows from naturality of $p,z,\ell,c$ and the respective diagrams of a tangent monad. To define the addition map $\overline{\sigma}$, we must first address limits in $\mathbb{X}^\mathsf{S}$. It is well known that any given diagram has a limit in the Eilenberg-Moore category as soon as it has a limit in the base category \cite{mac2013categories}. Therefore the tangent limits of $(\mathbb{X}, \mathbb{T})$ easily lift to $\mathbb{X}^\mathsf{S}$, where in particular
$ \overline{\mathsf{T}}_2(A, \xymatrixcolsep{1.5pc}\xymatrix{\mathsf{S}(A) \ar[r]^-{\nu} & A}) :=  (\mathsf{T}_2(A), \xymatrixcolsep{4.75pc}\xymatrix{\mathsf{S}\mathsf{T}_2(A) \ar[r]^-{\langle \mathsf{S}(\rho_0)\lambda, \mathsf{S}(\rho_1)\lambda \rangle} & \mathsf{T}_2\mathsf{S}(A) \ar[r]^-{\mathsf{T}_2(\nu)} & \mathsf{T}_2(A))}$. 
Then we have that $\overline{\sigma}_{(A,\nu)} := \sigma_A$. It follows that $\overline{\mathbb{T}} := (\overline{\mathsf{T}}, \overline{p}, \overline{\sigma}, \overline{z}, \overline{\ell}, \overline{c})$ is a tangent structure on $\mathbb{X}^\mathsf{S}$, which by definition is preserved strictly by the forgetful functor. We conclude that $(\mathbb{X}^\mathsf{S}, \overline{\mathbb{T}})$ is a tangent category. 
\end{proof} 

The converse of Proposition \ref{tanlift} is also true, that is, if the Eilenberg-Moore category of a monad admits a tangent structure that is strictly preserved by the forgetful functor, then said monad is a tangent monad. In fact, in analogy with results for other kinds of distributive laws \cite{wisbauer2008algebras}, tangent monads are in bijective correspondence with liftings of the tangent structure in this sense. Also note that by the universal property of the pullback there are distributive laws $\lambda_{n,M}: \mathsf{S}\mathsf{T}_n (M) \to \mathsf{T}_n\mathsf{S}(M)$ for each $n \in \mathbb{N}$. 

To provide a tangent structure on the Eilenberg-Moore category of a codifferential category, we will define a tangent monad structure on the algebra modality itself. However we first need to address which tangent structure of the base category we will be lifting to the Eilenberg-Moore category. This is where finite biproducts come into play. Recall that a category with \textbf{finite biproducts} \cite{mac2013categories} can be described as an additive category with a zero object $0 = 1$ such that for each pair of objects $A$ and $B$, there is an object $A \oplus B$ and maps $\iota_0: A \to A \oplus B$, $\iota_1: B \to A \oplus B$, $\pi_0: A \oplus B \to A$, and $\pi_1: A \oplus B \to B$, satisfying the well-known identities. This makes $A \oplus B$ both a product and a coproduct of $A$ and $B$. 

Every category $\mathbb{X}$ with finite biproducts admits a tangent structure whose tangent functor is the diagonal functor, that is, the tangent functor $\mathsf{T}$ is defined on objects as $\mathsf{T}(A) := A \oplus A$ and on maps as $\mathsf{T}(f) := f \oplus f$. The projection is $p_A := \xymatrixcolsep{1.5pc}\xymatrix{A \oplus A \ar[r]^-{\pi_0} & A }$, the zero is $z_A := \xymatrixcolsep{1.5pc}\xymatrix{A \ar[r]^-{\iota_0} & A \oplus A }$, the vertical lift is $\ell_A := \xymatrixcolsep{2pc}\xymatrix{A \oplus A \ar[r]^-{\iota_0 \oplus \iota_1} & A \oplus A \oplus A \oplus A }$, and the canonical flip is $ c_A := \xymatrixcolsep{3pc}\xymatrix{A \oplus A \oplus A \oplus A \ar[r]^-{1 \oplus \tau^\oplus \oplus 1} & A \oplus A \oplus A \oplus A}$ where $\tau^\oplus_{A,B}: A \oplus B \cong B \oplus A$ is the canonical symmetry isomorphism of the biproduct. Therefore it follows that for every $n \in \mathbb{N}$, $\mathsf{T}_n(A) := \bigoplus\limits^n_{i=0} A$. For $n=2$, $\mathsf{T}_2(A) := A \oplus A \oplus A$ and the addition map is $ \sigma_A := \xymatrixcolsep{3pc}\xymatrix{A \oplus A \oplus A \ar[r]^-{1 \oplus (\pi_0 + \pi_1)} & A \oplus A }$. We denote this tangent structure as $\mathbb{B}$ (for biproduct). That $(\mathbb{X}, \mathbb{B})$ is a tangent category follows from that fact that every category with finite biproducts is in fact a Cartesian differential category (see Example \ref{tanex}.\ref{Cartex}).  

\begin{proposition}\label{Codifflambda} Let $\mathbb{X}$ be a codifferential category with algebra modality $(\mathsf{S}, \eta, \mu, \nabla, \mathsf{u})$ and deriving transformation $\mathsf{d}$, and suppose that $\mathbb{X}$ admits finite biproducts $\oplus$. Define the natural transformation $\lambda_A: \mathsf{S}(A \oplus A) \to \mathsf{S}(A) \oplus \mathsf{S}(A)$ as the unique map that makes the following diagram commute: 
  \[  \xymatrixcolsep{2.25pc}\xymatrixrowsep{0.5pc}\xymatrix{ & \mathsf{S}(A \oplus A) \ar[dddl]_-{\mathsf{S}(\pi_0)} \ar@{-->}[ddd]^-{\lambda_A} \ar[r]^-{\mathsf{d}} & \mathsf{S}(A \oplus A) \otimes (A \oplus A) \ar[r]^-{\mathsf{S}(\pi_0) \otimes \pi_1} & \mathsf{S}(A) \otimes A \ar[r]^-{1_{\mathsf{S}(A)} \otimes \eta_A} & \mathsf{S}(A) \otimes \mathsf{S}(A) \ar[ddd]^-{\nabla_A} &  \\
  &&  \\
 && \\
\mathsf{S}(A) & \ar[l]^-{\pi_0} \mathsf{S}(A) \oplus \mathsf{S}(A) \ar[rrr]_-{\pi_1} &&& \mathsf{S}(A)  
  } \]
Equivalently, using the additive structure, $\lambda := \mathsf{S}(\pi_0) \iota_0 + \mathsf{d}(\mathsf{S}(\pi_0) \otimes \pi_1)(1 \otimes \eta)\nabla \iota_1$. 
Then $(\mathsf{S}, \mu, \eta, \lambda)$ is a tangent monad on $(\mathbb{X}, \mathbb{B})$. 
\end{proposition}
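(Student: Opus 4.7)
The formula defining $\lambda_A$ is characterized by its biproduct components: $\lambda_A\pi_0 = \mathsf{S}(\pi_0)$ and $\lambda_A\pi_1 = \mathsf{d}_{A\oplus A}(\mathsf{S}(\pi_0) \otimes \pi_1)(1 \otimes \eta_A)\nabla_A$. Naturality of $\lambda$ in $A$ follows at once from naturality of $\mathsf{d}$, $\pi_i$, $\eta$, and $\nabla$. Since every equation to be verified has a codomain that is a biproduct (possibly iterated), the strategy is to post-compose each side with the relevant projections and check equality component by component, exploiting the bilinearity of $\otimes$ over the additive enrichment (Definition \ref{AC}) to recombine sums where needed.

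For the three distributive-law axioms, the $\pi_0$-components are immediate from naturality of $\eta$, $\mu$, and $\mathsf{S}(\pi_0)$. Projecting to $\pi_1$: the unit axiom $\eta_{A \oplus A}\lambda_A = \eta_A \oplus \eta_A$ reduces to the linear rule \textbf{[cd.3]} together with the fact that $\mathsf{u}_A$ is the unit of $\nabla_A$; the multiplication axiom $\mu_{A\oplus A}\lambda_A = \mathsf{S}(\lambda_A)\lambda_{\mathsf{S}(A)}\mathsf{T}(\mu_A)$ is the principal technical point, and is obtained by rewriting $\mu\mathsf{d}$ using the chain rule \textbf{[cd.4]} and then reassembling the right-hand side using the fact that $\mu$ is a morphism of commutative monoids, so it commutes with $\nabla$ and $\mathsf{u}$.

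For the five tangent-monad compatibility axioms, each one reduces, upon post-composing with the appropriate biproduct projections, to a standard identity: the axiom for $p$ is immediate from $\lambda_A\pi_0=\mathsf{S}(\pi_0)$; the axiom for $z$ uses the constant rule \textbf{[cd.1]} together with $\pi_1 \iota_0 = 0$; the axiom for $\sigma$ follows from naturality of $\mathsf{d}$ with respect to the biproduct maps plus bilinearity of $\otimes$; the axiom for $\ell$ is a naturality computation confirming that the ``$\mathsf{d}$-component'' of $\lambda$, when iterated through $\mathsf{S}(\ell_A)$, lands in the expected ``purely second-order'' component of $\mathsf{T}^2\mathsf{S}(A)$; and the axiom for $c$ is a symmetry-of-mixed-partials statement that reduces to the interchange rule \textbf{[cd.5]}. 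The Leibniz rule \textbf{[cd.2]} enters as a supporting identity when it becomes necessary to commute $\mathsf{d}$ past $\nabla$ inside the $\mu$- and $\ell$-computations.

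The main obstacles are the $\mu$-compatibility and the $c$-compatibility of $\lambda$. Both require unfolding a composite of two copies of $\lambda$ into its biproduct components and recognizing that the resulting composite of two applications of $\mathsf{d}$ can be rearranged via \textbf{[cd.4]} and \textbf{[cd.5]} respectively. The $\ell$-axiom is the most bookkeeping-heavy: the four biproduct components of $\mathsf{T}^2\mathsf{S}(A)\cong\mathsf{S}(A)^{\oplus 4}$ must be carefully tracked, and a single application of $\mathsf{d}$ must be recognized as producing a summand concentrated in exactly one of those components. Once the right component-wise identities have been identified, each verification is a routine but lengthy calculation chaining naturality with one of the five codifferential axioms.
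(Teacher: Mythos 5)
Your proposal is correct and follows essentially the same route as the paper's proof: verify each distributive-law and tangent-monad axiom componentwise via the biproduct projections/injections, matching the $\eta$-axiom to the linear rule \textbf{[cd.3]}, the $\mu$-axiom to the chain rule \textbf{[cd.4]} plus the monoid-morphism property of $\mu$, the $c$-axiom to the interchange rule \textbf{[cd.5]}, and the $\ell$-axiom to a Leibniz-rule-assisted expansion of $\lambda\mathsf{T}(\lambda)$; the paper's only organizational difference is that it packages $(1 \otimes \eta)\nabla$ as the coderiving transformation $\mathsf{d}^\circ$ and pre-establishes three identities for it before grinding through the calculations. One small misattribution: the $z$-axiom needs only naturality of $\mathsf{d}$ together with $\iota_0\pi_1 = 0$, not the constant rule \textbf{[cd.1]}, which in fact is not used anywhere in the verification.
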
 
\begin{proof} See Appendix. 
\end{proof} 

An immediate consequence of Proposition \ref{tanlift} and Proposition \ref{Codifflambda} is that $(\mathbb{X}^\mathsf{S}, \overline{\mathbb{B}})$ is a tangent category. Summarizing, we obtain one of the main results of this paper: 

\begin{theorem}\label{thmcodiff} The Eilenberg-Moore category of a codifferential category with finite biproducts is a tangent category. 
\end{theorem}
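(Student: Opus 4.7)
The plan is to prove the theorem as an immediate corollary of the two preceding propositions. Given a codifferential category $\mathbb{X}$ with finite biproducts $\oplus$, my first step would be to equip $\mathbb{X}$ with the biproduct tangent structure $\mathbb{B}$ described just before Proposition \ref{Codifflambda}, so that $(\mathbb{X}, \mathbb{B})$ is a tangent category whose tangent functor is the diagonal $(-)\oplus(-)$. Then, invoking Proposition \ref{Codifflambda}, the algebra modality $(\mathsf{S},\mu,\eta,\nabla,\mathsf{u})$ together with the distributive law $\lambda_A = \mathsf{S}(\pi_0)\iota_0 + \mathsf{d}_{A\oplus A}(\mathsf{S}(\pi_0)\otimes\pi_1)(1\otimes\eta_A)\nabla_A\iota_1$ forms a tangent monad on $(\mathbb{X}, \mathbb{B})$. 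Applying Proposition \ref{tanlift} to this tangent monad then lifts $\mathbb{B}$ to a tangent structure $\overline{\mathbb{B}}$ on the Eilenberg-Moore category $\mathbb{X}^{\mathsf{S}}$, yielding the desired tangent category $(\mathbb{X}^{\mathsf{S}}, \overline{\mathbb{B}})$.

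Since Theorem \ref{thmcodiff} is this two-step combination, the proof itself is essentially one line once the supporting propositions are in hand. The genuine content therefore lies in Proposition \ref{Codifflambda}, which is the step I would expect to be the main obstacle. There one must show (i) that $\lambda$ is a distributive law over the monad $(\mathsf{S},\mu,\eta)$, i.e.\ it is compatible with $\mu$ and $\eta$, and (ii) that it additionally satisfies the five coherence squares of Definition \ref{tanmonaddef} with respect to $p$, $\sigma$, $z$, $\ell$, and $c$. The key insight is that because $\mathsf{T}A = A \oplus A$, any map into $\mathsf{T}\mathsf{S}(A)$ is determined by its two biproduct components, so $\lambda$ itself was defined componentwise: its first component is the ``projection'' $\mathsf{S}(\pi_0)$ and its second is a ``differentiation'' composite built from $\mathsf{d}$. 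This reduces each required equation to two diagram chases, one on each component.

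In those chases I would expect each of the codifferential axioms \textbf{[cd.1]}--\textbf{[cd.5]} to play a specific role: the linear rule \textbf{[cd.3]} handles compatibility with the unit $\eta$; the Leibniz rule \textbf{[cd.2]} together with commutativity of the monoid $(\mathsf{S}(A),\nabla_A,\mathsf{u}_A)$ handles compatibility with $\mu$; the constant rule \textbf{[cd.1]} gives compatibility with the zero $z$ (via $\iota_0$); the projection and addition squares for $p$ and $\sigma$ reduce to easy biproduct identities once one unfolds $\lambda$; the interchange rule \textbf{[cd.5]} is what forces compatibility with the canonical flip $c$; and the vertical lift square for $\ell$ follows from \textbf{[cd.3]} together with the fact that differentiating twice via $\mathsf{d}$ produces a term linear in the second coordinate. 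These are routine but numerous calculations, which is why they are deferred to the Appendix; once Proposition \ref{Codifflambda} is granted, Theorem \ref{thmcodiff} is immediate.
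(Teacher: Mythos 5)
Your proof is correct and is exactly the paper's: Theorem \ref{thmcodiff} is obtained as the immediate combination of Proposition \ref{Codifflambda} (which makes $(\mathsf{S},\mu,\eta,\lambda)$ a tangent monad on $(\mathbb{X},\mathbb{B})$) with Proposition \ref{tanlift} (which lifts the tangent structure to $\mathbb{X}^{\mathsf{S}}$). Your speculative attribution of axioms in the deferred calculations is only slightly off --- the paper uses the chain rule \textbf{[cd.4]} (not the Leibniz rule) for compatibility with $\mu$, and plain biproduct/additive identities (not the constant rule) for compatibility with $z$ --- but since you invoke Proposition \ref{Codifflambda} as given, this does not affect the proof of the theorem.
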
 

In particular, the result of applying the tangent functor of $(\mathbb{X}^\mathsf{S}, \overline{\mathbb{B}})$ to an $\mathsf{S}$-algebra can be simplified to $\overline{\mathsf{T}}(A, \nu) := (A \oplus A, \mathsf{S}(\pi_0) \nu \iota_0+ \mathsf{d}(\mathsf{S}(\pi_0) \otimes \pi_1)(\nu \otimes 1)\nabla^\nu \iota_1)$, 
which is an instance of the $\mathsf{S}$-algebra structure defined in \cite[Theorem 4.1]{blute2015derivations}. Denote the $\mathsf{S}$-algebra structure of $\overline{\mathsf{T}}(A, \nu)$ as ${\nu^\flat: \mathsf{S}(A \oplus A) \to A \oplus A}$. 
By \cite[Proposition 5.4]{blute2015derivations}, the induced commutative monoid structure on $\overline{\mathsf{T}}(A, \nu)$, generalizes that of the ring of dual numbers from Example \ref{tanex}.\ref{Crigex}:
$$\nabla^{\nu^\flat} = (\pi_0 \otimes \pi_0)\nabla^\nu \iota_0 + \left[ (\pi_0 \otimes \pi_1) + (\pi_1 \otimes \pi_0) \right] \nabla^\nu \iota_1 \quad \quad \mathsf{u}^{\nu^\flat} = \mathsf{u}^\nu \iota_0 $$
Thus, the above tangent structure on the Eilenberg-Moore category of a codifferential category further highlights the relation between tangent structure and Weil algebras \cite{leung2017classifying}. 

\begin{example} \normalfont We conclude this section with some of the resulting tangent categories from our main examples of codifferential categories: 
 \begin{enumerate}
\item For a field $k$, when applying the constructions of this section to Example \ref{codiffex}.\ref{MODdiffex}, one recovers precisely the tangent category from Example \ref{tanex}.\ref{Crigex} induced by dual numbers, recalling that ${\mathsf{VEC}^{\mathsf{Sym}}_k \cong \mathsf{CALG}_k}$.
\item For Example \ref{codiffex}.\ref{Cinfdiffex}, the resulting tangent structure on the category of $C^\infty$-rings is particularly important, since the ring of dual numbers plays a key role in models of synthetic differential geometry based on $C^\infty$-rings \cite{kock2006synthetic,moerdijk2013models}. 
\item For a field $k$, one can also apply the constructions of this section to Example \ref{diffex}.\ref{cofreediffex}, which implies that the opposite category of cocommutative $k$-coalgebras is a tangent category.  
\end{enumerate}
\end{example}

\section{Representable Tangent Structure and Differential Categories} \label{CoEMdiffsec}

The goal of this section is to show that the coEilenberg-Moore category of a differential category with biproducts is a tangent category, provided that it has certain equalizers. We will also explain that in the case of a differential storage category, the coEilenberg-Moore category is in fact a \emph{representable} tangent category. To achieve this, we wish to apply Theorem \ref{tanadj} to the tangent structure that we constructed in the previous section, which resides on the Eilenberg-Moore category of a codifferential category. 

In a category $\mathbb{X}$ with finite biproducts, the pullback powers of the projection $p_A := \pi_0$ are $\mathsf{T}_n(A) = \bigoplus\limits^n_{i=0} A$. By the universality of the product and couniversality of the coproduct, each $\mathsf{T}_n$ is its own adjoint, that is, $\mathsf{T}_n$ is a left adjoint to $\mathsf{T}_n$. However in the Eilenberg-Moore category, $\overline{\mathsf{T}}_n$ is not necessarily its own adjoint (in fact it is far from it in any of the examples in this paper). Therefore we cannot use results about lifting adjunctions to Eilenberg-Moore category on the nose such as in \cite{hermida1998structural}. Instead we employ Johnstone's left adjoint lifting theorem \cite[Theorem 2]{johnstone1975adjoint_lifting}, which is a special case of the adjoint lifting theorem of Butler that can be found in \cite[Theorem 7.4]{barr2000toposes}, and it is at this point that we require the mild further assumption that the Eilenberg-Moore category admits reflexive coequalizers. First recall that a reflexive pair is a pair of parallel maps $f, g: A \to B$ with a common section, that is, there is a map $h: B \to A$ such that $hf=1_B=hg$. A reflexive coequalizer is a coequalizer of a reflexive pair, and a category is said to have reflexive coequalizers if it has coequalizers of all reflexive pairs. A famous result of Linton's is that for a monad on a cocomplete category, the Eilenberg-Moore category is cocomplete if and only if the Eilenberg-Moore category admits all reflexive coequalizers \cite{linton1969coequalizers}.  

\begin{proposition}\label{weakadjprop} \cite[Theorem 2]{johnstone1975adjoint_lifting} Let $\lambda$ be a distributive law of a functor $\mathsf{R}:\mathbb{X} \rightarrow \mathbb{X}$ over a monad $(\mathsf{S}, \mu, \eta)$, and suppose that $\mathsf{R}$ has a left adjoint $\mathsf{L}$.  If $\mathbb{X}^\mathsf{S}$ admits reflexive coequalizers then the lifting of $\mathsf{R}$, $\overline{\mathsf{R}}: \mathbb{X}^\mathsf{S} \to \mathbb{X}^\mathsf{S}$, has a left adjoint $\mathsf{G}: \mathbb{X}^\mathsf{S} \to \mathbb{X}^\mathsf{S}$ such that $\mathsf{G}(\mathsf{S}(A), \mu_A) = (\mathsf{S}\mathsf{L}(A), \mu_{\mathsf{L}(A)})$. 
\end{proposition}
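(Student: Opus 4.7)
The plan is to follow the standard adjoint lifting strategy (as in Johnstone and Butler): first pin down what $\mathsf{G}$ must do on free algebras, then extend to arbitrary algebras by presenting each algebra as a reflexive coequalizer of free algebras and invoking the reflexive coequalizer hypothesis in $\mathbb{X}^{\mathsf{S}}$.

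First I would dispense with the free case. For a free algebra $(\mathsf{S}A,\mu_A)$ and any $(B,\xi) \in \mathbb{X}^{\mathsf{S}}$, the putative adjunction isomorphism forces
$$
\mathrm{Hom}_{\mathbb{X}^{\mathsf{S}}}\bigl((\mathsf{S}\mathsf{L}A,\mu_{\mathsf{L}A}),(B,\xi)\bigr) \;\cong\; \mathrm{Hom}_{\mathbb{X}}(\mathsf{L}A,B) \;\cong\; \mathrm{Hom}_{\mathbb{X}}(A,\mathsf{R}B) \;\cong\; \mathrm{Hom}_{\mathbb{X}^{\mathsf{S}}}\bigl((\mathsf{S}A,\mu_A),\overline{\mathsf{R}}(B,\xi)\bigr),
$$
using the free/forgetful adjunction twice and $\mathsf{L}\dashv\mathsf{R}$ in the middle. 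This justifies the formula $\mathsf{G}(\mathsf{S}A,\mu_A)=(\mathsf{S}\mathsf{L}A,\mu_{\mathsf{L}A})$ in the statement and gives the adjunction bijection on the full subcategory of free algebras, naturally in $(B,\xi)$. To promote this to a functor on free algebras, I send an algebra morphism $f\colon (\mathsf{S}A,\mu_A) \to (\mathsf{S}B,\mu_B)$ to the unique algebra morphism whose transpose $A \to \mathsf{R}\mathsf{S}\mathsf{L}B$ under $\mathsf{L}\dashv\mathsf{R}$ (followed by the free transpose) is the composite $A \xrightarrow{\eta_A} \mathsf{S}A \xrightarrow{f} \mathsf{S}B \xrightarrow{\mathsf{S}(\eta^{\mathsf{L}\dashv\mathsf{R}}_B)} \mathsf{S}\mathsf{R}\mathsf{L}B \xrightarrow{\lambda_{\mathsf{L}B}} \mathsf{R}\mathsf{S}\mathsf{L}B$; here the distributive law $\lambda$ is precisely what is needed to turn a map into $\mathsf{S}B$ into a map into $\mathsf{R}(\mathsf{S}\mathsf{L}B)$. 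Functoriality and compatibility of this assignment with the three hom-isomorphisms above are routine verifications using the distributive law axioms.

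Next I would extend $\mathsf{G}$ to all of $\mathbb{X}^{\mathsf{S}}$ using the canonical presentation. Every $\mathsf{S}$-algebra $(A,\nu)$ sits in a reflexive coequalizer diagram in $\mathbb{X}^{\mathsf{S}}$,
$$
(\mathsf{S}^2 A,\mu_{\mathsf{S}A}) \xrightrightarrows[\;\mu_A\;]{\;\mathsf{S}(\nu)\;} (\mathsf{S}A,\mu_A) \xrightarrow{\;\nu\;} (A,\nu),
$$
with common section $\mathsf{S}(\eta_A)$ (the identities $\mathsf{S}(\eta_A)\mathsf{S}(\nu) = 1 = \mathsf{S}(\eta_A)\mu_A$ follow from the monad axioms). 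Applying the free part of $\mathsf{G}$ already defined yields a reflexive pair $(\mathsf{S}\mathsf{L}\mathsf{S}A,\mu_{\mathsf{L}\mathsf{S}A}) \rightrightarrows (\mathsf{S}\mathsf{L}A,\mu_{\mathsf{L}A})$ in $\mathbb{X}^{\mathsf{S}}$, whose coequalizer exists by hypothesis; define $\mathsf{G}(A,\nu)$ to be this coequalizer. A standard diagram-chase extends this construction to morphisms, giving a functor $\mathsf{G}\colon \mathbb{X}^{\mathsf{S}} \to \mathbb{X}^{\mathsf{S}}$ that strictly preserves the prescribed value on the free algebra $(\mathsf{S}A,\mu_A)$.

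Finally I would verify the adjunction $\mathsf{G} \dashv \overline{\mathsf{R}}$. Because $\mathsf{G}(A,\nu)$ is a coequalizer and $\mathrm{Hom}(-,(B,\xi))$ carries coequalizers to equalizers,
$$
\mathrm{Hom}_{\mathbb{X}^{\mathsf{S}}}\bigl(\mathsf{G}(A,\nu),(B,\xi)\bigr) \;\cong\; \mathrm{eq}\Bigl(\mathrm{Hom}\bigl((\mathsf{S}A,\mu_A),(B,\xi)\bigr) \rightrightarrows \mathrm{Hom}\bigl((\mathsf{S}^2 A,\mu_{\mathsf{S}A}),(B,\xi)\bigr)\Bigr)
$$
after transporting $\mathsf{G}$ across the free-algebra bijection from the first paragraph. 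The corresponding equalizer on the $\overline{\mathsf{R}}$ side is
$$
\mathrm{eq}\Bigl(\mathrm{Hom}\bigl((\mathsf{S}A,\mu_A),\overline{\mathsf{R}}(B,\xi)\bigr) \rightrightarrows \mathrm{Hom}\bigl((\mathsf{S}^2 A,\mu_{\mathsf{S}A}),\overline{\mathsf{R}}(B,\xi)\bigr)\Bigr) \;\cong\; \mathrm{Hom}_{\mathbb{X}^{\mathsf{S}}}\bigl((A,\nu),\overline{\mathsf{R}}(B,\xi)\bigr),
$$
using the same presentation of $(A,\nu)$ together with the representability of $\mathrm{Hom}((A,\nu),-)$ as an equalizer. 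Naturality of the free-algebra bijection (which is compatibility with $\lambda$) ensures the two equalizers agree, completing the adjunction.

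The main obstacle is the construction of $\mathsf{G}$ on morphisms between free algebras: this is the step where the distributive law $\lambda$ enters in an essential way, and where one must check that the various transposes respect composition and the monad multiplication. Once this coherence is in hand, the coequalizer extension and the adjunction verification are essentially formal consequences of the universal property of reflexive coequalizers and the hom-set calculation on free algebras.
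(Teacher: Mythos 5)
Your proposal is correct and follows exactly the standard Butler--Johnstone adjoint-lifting argument: define $\mathsf{G}$ on free algebras via $\mathsf{G}F=F\mathsf{L}$ (using the mate of $\lambda$ to extend to Kleisli morphisms), then extend along the canonical reflexive-coequalizer presentation of an arbitrary algebra. The paper itself offers no proof of this proposition, deferring entirely to Johnstone's Theorem~2, and your argument is a faithful reconstruction of that cited proof.
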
 

Applying Proposition \ref{weakadjprop} to the Eilenberg-Moore category of a codifferential category, we obtain the following result: 

\begin{proposition} \label{propadj2} Let $\mathbb{X}$ be a codifferential category with algebra modality $(\mathsf{S}, \eta, \mu, \nabla, \mathsf{u})$ and deriving transformation $\mathsf{d}$, and suppose that $\mathbb{X}$ admits finite biproducts and $\mathbb{X}^\mathsf{S}$ admits reflexive coequalizers. Then for each $n \in \mathbb{N}$, $\overline{\mathsf{T}}_n: \mathbb{X}^\mathsf{S} \to \mathbb{X}^\mathsf{S}$ has a left adjoint. \end{proposition}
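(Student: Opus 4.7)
My plan is to invoke the left adjoint lifting theorem (Proposition \ref{weakadjprop}) for each $n \in \mathbb{N}$, applied to the lifted functor $\overline{\mathsf{T}}_n : \mathbb{X}^\mathsf{S} \to \mathbb{X}^\mathsf{S}$. To set this up, I will verify three prerequisites: (i) a distributive law $\lambda_n: \mathsf{S}\mathsf{T}_n \to \mathsf{T}_n \mathsf{S}$ of $\mathsf{T}_n$ over $\mathsf{S}$ giving rise to $\overline{\mathsf{T}}_n$; (ii) a left adjoint to $\mathsf{T}_n$ in the base category $\mathbb{X}$; and (iii) reflexive coequalizers in $\mathbb{X}^\mathsf{S}$.

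Item (iii) holds by hypothesis. For (ii), I will observe that since $\mathbb{X}$ has finite biproducts, the $(n+1)$-fold biproduct functor $\mathsf{T}_n(A) = \bigoplus_{i=0}^{n} A$ is self-adjoint. This is most cleanly seen from the commutative-monoid enrichment: $\mathbb{X}(\mathsf{T}_n A, B) \cong \mathbb{X}(A, B)^{n+1} \cong \mathbb{X}(A, \mathsf{T}_n B)$ naturally in $A$ and $B$, giving $\mathsf{T}_n \dashv \mathsf{T}_n$ with unit and counit assembled from the diagonals, codiagonals, injections and projections of the biproduct. The edge case $n = 0$ is immediate because $\mathsf{T}_0 = 1_\mathbb{X}$.

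For (i), the case $n = 1$ is supplied by Proposition \ref{Codifflambda}. For $n \geq 2$, I would construct $\lambda_n$ in the manner alluded to in the remark following Proposition \ref{tanlift}: since the biproduct makes $\mathsf{T}_n$ a finite product in $\mathbb{X}$, the universal property allows me to define $\lambda_n$ componentwise, with the $0$-th projection equal to $\mathsf{S}(\pi_0)$ and the $i$-th projection (for $i \geq 1$) given by the derivative-like formula $\mathsf{d}\,(\mathsf{S}(\pi_0) \otimes \pi_i)(1 \otimes \eta_A)\nabla_A$, in direct imitation of the $n = 1$ recipe of Proposition \ref{Codifflambda}. I expect the pentagon and unit axioms for a distributive law to reduce, upon postcomposition with each component projection of $\mathsf{T}_n$, to the already-verified axioms for $\lambda$ together with naturality of the biproduct projections.

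With (i)--(iii) in hand, Proposition \ref{weakadjprop} immediately delivers a left adjoint to $\overline{\mathsf{T}}_n$ for every $n \in \mathbb{N}$, which is the desired conclusion. The only genuinely technical step is the definition and axiom verification for the higher distributive laws $\lambda_n$ with $n \geq 2$, but since the biproduct decomposition reduces every such check to the case $n = 1$ handled in Proposition \ref{Codifflambda}, I expect this to be routine bookkeeping rather than a substantive obstacle.
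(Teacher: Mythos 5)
Your proposal is correct and follows essentially the same route as the paper: the paper likewise obtains this result by applying Johnstone's adjoint lifting theorem (Proposition~\ref{weakadjprop}), using that $\mathsf{T}_n$ is self-adjoint via the biproduct structure, that the distributive laws $\lambda_n$ are induced componentwise from $\lambda$ by the universal property of the (bi)product powers, and that $\mathbb{X}^\mathsf{S}$ has reflexive coequalizers by hypothesis. Your explicit componentwise formula for $\lambda_n$ agrees with the pairing $\langle \mathsf{S}(\rho_i)\lambda\rangle_i$ used in the paper, so the only difference is that you spell out checks the paper leaves implicit.
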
 

Applying Theorem \ref{tanadj} to the above proposition, we obtain the main result of this paper: 

\begin{theorem}\label{thm:coem_dc_tan} If the coEilenberg-Moore category of a differential category with finite biproducts admits coreflexive equalizers (the dual of reflexive coequalizers), then the coEilenberg-Moore category is a tangent category. 
\end{theorem}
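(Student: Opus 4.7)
The plan is to deduce this theorem by dualizing the corresponding results for codifferential categories that have already been established in Section \ref{EMCodiffsec}, and then applying Theorem \ref{tanadj}. First I would observe that the notions of differential category and codifferential category are formally dual: if $\mathbb{X}$ is a differential category with coalgebra modality $(\oc, \delta, \varepsilon, \Delta, \mathsf{e})$ and deriving transformation $\mathsf{d}$, then $\mathbb{X}^{op}$ is canonically a codifferential category whose algebra modality $(\mathsf{S}, \mu, \eta, \nabla, \mathsf{u})$ is obtained by reversing the arrows. Since finite biproducts are self-dual, $\mathbb{X}^{op}$ inherits finite biproducts from $\mathbb{X}$.

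Next I would pin down the relationship between the relevant Eilenberg--Moore and coEilenberg--Moore categories. There is a canonical isomorphism $(\mathbb{X}^\oc)^{op} \cong (\mathbb{X}^{op})^\mathsf{S}$ sending a $\oc$-coalgebra $(A, \omega)$ to the $\mathsf{S}$-algebra on $A$ with structure map $\omega$ viewed in $\mathbb{X}^{op}$. Under this isomorphism, coreflexive equalizers in $\mathbb{X}^\oc$ correspond to reflexive coequalizers in $(\mathbb{X}^{op})^\mathsf{S}$, since the defining diagrams are simply reversed. Thus the hypothesis that $\mathbb{X}^\oc$ admits coreflexive equalizers translates to $(\mathbb{X}^{op})^\mathsf{S}$ admitting reflexive coequalizers, which is exactly the hypothesis required by Proposition \ref{propadj2}.

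With the setup in place, the proof becomes a short chain of applications. By Theorem \ref{thmcodiff} applied to the codifferential category $\mathbb{X}^{op}$, the category $(\mathbb{X}^{op})^\mathsf{S}$ carries the tangent structure $\overline{\mathbb{B}}$, with tangent functor and pullback powers $\overline{\mathsf{T}}_n$ obtained by lifting the biproduct tangent structure along the distributive law $\lambda$ of Proposition \ref{Codifflambda}. By Proposition \ref{propadj2}, each $\overline{\mathsf{T}}_n : (\mathbb{X}^{op})^\mathsf{S} \to (\mathbb{X}^{op})^\mathsf{S}$ admits a left adjoint. Finally, Theorem \ref{tanadj} turns this into a tangent structure on the opposite category $\bigl((\mathbb{X}^{op})^\mathsf{S}\bigr)^{op} \cong \mathbb{X}^\oc$, giving the desired conclusion.

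The main obstacle is not really a substantive mathematical one but rather bookkeeping: one must carefully check that the two dualities (differential vs.\ codifferential, and Eilenberg--Moore vs.\ coEilenberg--Moore) compose to the identity on the relevant data, so that the left adjoints produced by Proposition \ref{propadj2} for $\overline{\mathsf{T}}_n$ on $(\mathbb{X}^{op})^\mathsf{S}$ are exactly the functors whose opposites serve as the tangent pullback powers on $\mathbb{X}^\oc$. Beyond this, the induced tangent functor on $\mathbb{X}^\oc$ is simply the opposite of the left adjoint of $\overline{\mathsf{T}}$, and no further verification beyond Theorem \ref{tanadj} is needed.
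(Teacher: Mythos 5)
Your proposal is correct and follows exactly the paper's route: dualize to a codifferential category, obtain the tangent structure on the Eilenberg--Moore category via Theorem \ref{thmcodiff}, get left adjoints for each $\overline{\mathsf{T}}_n$ from Proposition \ref{propadj2} (using the coreflexive-equalizer hypothesis in its dualized form), and conclude by Theorem \ref{tanadj}. The only difference is that you spell out the duality bookkeeping that the paper leaves implicit, which is a reasonable addition rather than a deviation.
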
 

For differential storage categories, in order to show that the coEilenberg-Moore category is a representable tangent category, we will need to look at the construction of Section \ref{EMCodiffsec} for codifferential categories with comonoidal algebra modalities. So let $\mathbb{X}$ be a codifferential category with a comonoidal algebra modality $(\mathsf{S}, \mu, \eta, \nabla, \mathsf{u}, \mathsf{n}, \mathsf{n}_K)$ such that $\mathbb{X}$ also admits finite biproducts, noting that $\mathsf{n}$ and $\mathsf{n}_K$ denote the comonoidal structure on $\mathsf{S}$ (Section \ref{coalgref}). Note that in $\mathbb{X}$, it follows from distributivity between the biproduct and monoidal product (which is automatic in any additive symmetric monoidal category) that for every $n \in \mathbb{N}$ one has that $\mathsf{T}_n(A) = \bigoplus\limits^n_{i=0} A \cong \bigoplus\limits^n_{i=0} (A \otimes K) \cong A \otimes \bigoplus\limits^n_{i=0} K = A \otimes \mathsf{T}_n(K)$. In particular when $n=1$, $\mathsf{T}(A) \cong A \otimes (K \oplus K)$. Recall that for a comonoidal algebra modality, $\otimes$ is a coproduct in the Eilenberg-Moore category and there is a map $\mathsf{n}_K: \mathsf{S}(K) \to K$ making $(K, \mathsf{n}_K)$ into an $\mathsf{S}$-algebra and an initial object. Then for every $n \in \mathbb{N}$ one has the following isomorphisms in the Eilenberg-Moore category $\mathbb{X}^\mathsf{S}$: $\overline{\mathsf{T}}_n(A, \nu) \cong (A, \nu) \otimes \overline{\mathsf{T}}_n(K, \mathsf{n}_K)$, and in particular, for $n=1$, $\overline{\mathsf{T}}(A, \nu) \cong (A, \nu) \otimes \overline{\mathsf{T}}(K, \mathsf{n}_K) = (A, \nu) \otimes (K \oplus K, \mathsf{n}^\flat_K)\;.$
 If $\mathbb{X}^\mathsf{S}$ admits reflexive coequalizers then by Proposition \ref{propadj2}, each functor $\overline{\mathsf{T}}_n \cong - \otimes \overline{\mathsf{T}}_n(K, \mathsf{n}_K)$ has a left adjoint. Dualizing, this implies that $- \otimes \overline{\mathsf{T}}_n(K, \mathsf{n}_K): (\mathbb{X}^\mathsf{S})^{op} \to (\mathbb{X}^\mathsf{S})^{op}$ has a right adjoint and therefore that $\overline{\mathsf{T}}_n(K, \mathsf{n}_K)$ is an exponent object in $(\mathbb{X}^\mathsf{S})^{op}$. Therefore, in view of Theorem \ref{thm:coem_dc_tan}, $(\mathbb{X}^\mathsf{S})^{op}$ is a representable tangent category whose infinitesimal object is $\overline{\mathsf{T}}(K, \mathsf{n}_K) = (K \oplus K,  \mathsf{n}^\flat_K)$. 
 
Let us restate this result in terms of differential storage categories. Let $\mathbb{X}$ be a differential storage category with coalgebra modality $(\oc, \delta, \varepsilon, \Delta, \mathsf{e})$ equipped with deriving transformation $\mathsf{d}_A: \oc(A) \otimes A \to \oc(A)$ and such that $\mathbb{X}$ has finite biproducts $\oplus$. Recall that the monoidal structure on $\oc$ includes a map $\mathsf{m}_K: K \to \oc(K)$ that makes $(K, \mathsf{m}_K)$ into a $\oc$-coalgebra. If $\mathbb{X}^\oc$ admits coreflexive equalizers, then $\mathbb{X}^\oc$ is a representable tangent category whose infinitesimal object is $(K \oplus K, \mathsf{m}^\sharp_K)$, where $\mathsf{m}^\sharp_K: K \oplus K \to \oc(K \oplus K)$ is defined as the unique map that makes the following diagram commute (using the couniversal property of the coproduct): 
$$ \xymatrixcolsep{1.5pc}\xymatrixrowsep{1pc}\xymatrix{ K\ar[dd]_-{\mathsf{m}_K} & K \oplus K \ar@{<-}[l]_-{\iota_0} \ar@{-->}[dd]^-{\mathsf{m}^\sharp_K} \ar@{<-}[r]^-{\iota_1} & K \ar[r]^-{\cong} & K \otimes K \ar[d]^-{\mathsf{m}_K \otimes 1_K}  \\
  &&& \oc(K) \otimes K \ar[d]^-{\oc(\iota_0) \otimes \iota_1} \\
 \oc(K) \ar[r]_-{\oc(\iota_0)} &\oc(K \oplus K) && \oc(K \oplus K) \otimes (K \oplus K) \ar[ll]^-{\mathsf{d}_{K \oplus K}} 
  } $$ 
 We summarize this result for differential storage categories to obtain the final main result of this paper: 

\begin{theorem}\label{reptanthm} If the coEilenberg-Moore category of a differential storage category admits coreflexive equalizers, then the coEilenberg-Moore category is a representable tangent category.  \end{theorem}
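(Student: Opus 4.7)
My plan is to dualize throughout and reduce to the situation already handled in Section \ref{EMCodiffsec}. Let $\mathbb{X}$ be a differential storage category with monoidal coalgebra modality $(\oc,\delta,\varepsilon,\Delta,\mathsf{e},\mathsf{m},\mathsf{m}_K)$, deriving transformation $\mathsf{d}$, and finite biproducts, and suppose $\mathbb{X}^\oc$ admits coreflexive equalizers. Then $\mathbb{X}^{op}$ is a codifferential category whose algebra modality $\mathsf{S} := \oc^{op}$ is comonoidal, finite biproducts are retained, and $(\mathbb{X}^{op})^\mathsf{S} \cong (\mathbb{X}^\oc)^{op}$ has reflexive coequalizers. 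By Theorem \ref{thmcodiff}, $(\mathbb{X}^{op})^\mathsf{S}$ carries the tangent structure $\overline{\mathbb{B}}$ lifted from the biproduct tangent structure on $\mathbb{X}^{op}$, equivalently Theorem \ref{thm:coem_dc_tan} gives $\mathbb{X}^\oc$ a tangent structure; this part requires nothing new.

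The key step is to exhibit each tangent functor as representable. Working in $\mathbb{X}^{op}$, distributivity of $\otimes$ over $\oplus$ (automatic in an additive symmetric monoidal category) yields $\mathsf{T}_n(A) = \bigoplus_{i=0}^n A \cong A \otimes \mathsf{T}_n(K)$. Because $\mathsf{S}$ is comonoidal, the monoidal product $\otimes$ becomes a coproduct in $(\mathbb{X}^{op})^\mathsf{S}$ and $(K,\mathsf{n}_K)$ is initial, so this isomorphism lifts along the forgetful functor to give
\[
\overline{\mathsf{T}}_n(A,\nu) \;\cong\; (A,\nu) \otimes \overline{\mathsf{T}}_n(K,\mathsf{n}_K)
\]
naturally in $(A,\nu)$. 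Next, Proposition \ref{propadj2} supplies a left adjoint to each $\overline{\mathsf{T}}_n$ in $(\mathbb{X}^{op})^\mathsf{S}$, which, dualized through $(\mathbb{X}^{op})^\mathsf{S} \cong (\mathbb{X}^\oc)^{op}$, says that the functor $- \otimes \overline{\mathsf{T}}_n(K,\mathsf{n}_K)$ on $\mathbb{X}^\oc$ (where $\otimes$ is now a product) has a right adjoint. Hence $\overline{\mathsf{T}}_n(K,\mathsf{n}_K)$, read as an object of $\mathbb{X}^\oc$, is an exponent object that represents the $n$-th tangent functor of the tangent structure from Theorem \ref{thm:coem_dc_tan}. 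Combined with the tangent structure already in hand, this is exactly the data of a representable tangent category.

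Finally, I would identify the infinitesimal object explicitly. Applying the formula for the lifted $\mathsf{S}$-algebra structure $\nu^\flat$ from Section \ref{EMCodiffsec} to $(K,\mathsf{n}_K)$ yields $\overline{\mathsf{T}}(K,\mathsf{n}_K) = (K\oplus K, \mathsf{n}^\flat_K)$, whose dual as a $\oc$-coalgebra on $K\oplus K$ is precisely the map $\mathsf{m}^\sharp_K$ defined by the coproduct-universal diagram in the excerpt. This is a direct unfolding of the formula $\mathsf{S}(\pi_0)\nu\iota_0 + \mathsf{d}(\mathsf{S}(\pi_0)\otimes\pi_1)(\nu\otimes 1)\nabla^\nu \iota_1$ in the opposite category. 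The main obstacle I anticipate is not any single calculation but rather bookkeeping: keeping precise track of which category one is working in and correctly exchanging products with coproducts, initial with terminal objects, and left adjoints with right adjoints as one moves between $\mathbb{X}$, $\mathbb{X}^{op}$, $\mathbb{X}^\oc$, and $(\mathbb{X}^{op})^\mathsf{S}$, and in particular verifying that the comonoidal structure $(\mathsf{m},\mathsf{m}_K)$ on $\oc$ is exactly what is needed to make the isomorphism $\overline{\mathsf{T}}_n(A,\nu) \cong (A,\nu)\otimes\overline{\mathsf{T}}_n(K,\mathsf{n}_K)$ available on the Eilenberg-Moore side.
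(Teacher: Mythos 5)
Your proposal is correct and follows essentially the same route as the paper: pass to the codifferential/comonoidal-algebra-modality picture, use distributivity of $\otimes$ over $\oplus$ to get $\overline{\mathsf{T}}_n(A,\nu)\cong(A,\nu)\otimes\overline{\mathsf{T}}_n(K,\mathsf{n}_K)$ via the coproduct structure on the Eilenberg--Moore category, invoke Proposition \ref{propadj2} for the left adjoints, and dualize to exhibit $\overline{\mathsf{T}}_n(K,\mathsf{n}_K)$ as an exponent object, with infinitesimal object $(K\oplus K,\mathsf{m}^\sharp_K)$. The paper's treatment is exactly this argument, stated first in the codifferential setting and then restated for differential storage categories.
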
 

\begin{example} \normalfont We conclude this section by looking briefly at some examples.
 \begin{enumerate}
\item For a field $k$ and Example \ref{codiffex}.\ref{MODdiffex}, recall once again that $\mathsf{VEC}^\mathsf{Sym}_k \cong \mathsf{CALG}_k$. It is well known that $\mathsf{CALG}_k$ is complete and cocomplete, and therefore $\mathsf{VEC}^\mathsf{Sym}_k$ admits reflexive coequalizers. Applying Theorem \ref{reptanthm} to this example, one obtains precisely the tangent structure on $\mathsf{CALG}_k^{op}$ from Example \ref{reptaneex}.\ref{Crigop} (and described in full detail in \cite[Proposition 5.16]{cockett2014differential}), where we recall that the infinitesimal object is the ring of dual numbers $k[\epsilon]$. It is interesting to note that for polynomial rings $k[X]$ we have in $\mathsf{CALG}_k^{op}$ that ${k[X]^{k[\epsilon]} \cong k[X] \otimes k[X]}$. In \cite{roryCT2014, roryFMCS}, the third author generalized the tangent structure on the opposite of the category of commutative $k$-algebras to the setting of certain codifferential categories, using universal derivations \cite{blute2015derivations}, which generalize K{\"a}hler differentials. 
\item Similarly for Example \ref{diffex}.\ref{cofreediffex} (again for a field $k$) recall that $\mathsf{VEC}^\oc_k$ is isomorphic to the category of cocommutative $k$-coalgebras, which is both complete and cocomplete \cite{porst2006corings,agore2011limits}. Therefore, $\mathsf{VEC}^\oc_k$ admits coreflexive equalizers and so by applying Theorem \ref{reptanthm},  $\mathsf{VEC}^\oc_k$ is a representable tangent category. Most interesting is that the infinitesimal object in this case is again the ring of dual numbers $k[\epsilon]$ but seen as a cocommutative $k$-coalgebra with comultiplication defined on the basis elements as $1 \mapsto 1 \otimes 1$ and $\epsilon \mapsto 1 \otimes \epsilon + \epsilon \otimes 1$. The coalgebra $k[\epsilon]$ played an important role in \cite{clift2017cofree}. 
\item For $C^\infty$-rings and Example \ref{codiffex}.\ref{Cinfdiffex}, the Eilenberg-Moore category is the category of $C^\infty$-rings.  As it is the category of algebras for a Lawvere theory, the category of $C^\infty$-rings is both complete and complete and therefore, in particular, admits reflexive coequalizers. Hence it follows that the opposite of the category of $C^\infty$-rings is a tangent category. In particular, for a smooth manifold $M$, the image of $C^\infty(M)$ under the tangent functor in this case is precisely $C^\infty(\mathsf{T}(M))$, where $\mathsf{T}(M)$ is the standard tangent bundle over $M$. 
\end{enumerate}
\end{example}

\section{Conclusion}\label{conclusion}

Given that differential categories involve a comonad it seems obvious from a categorical perspective that one should consider the coEilenberg-Moore category 
of coalgebras. That these coEilenberg-Moore categories are tangent categories, assuming the existence of coreflexive equalizers, provides an important way of generating tangent categories 
that is already ``baked in'' to algebraic geometry and synthetic differential geometry. As there are many examples of differential categories from various fields, this opens the door to studying new and interesting tangent categories. 

However, viewing these structures at this level of generality raises a number of further questions. Indeed, the fact that one has a tangent category begs the question of how various devices from abstract differential geometry (such as vector fields, Lie algebras, connections, solutions to differential equations, etc.) manifest in these settings. For example when and how do ``curve objects'' and ``line objects'' appear in these settings?  In any tangent category, one can also consider \emph{differential objects}, which are objects $A$ such that $\mathsf{T}(A) \cong A \times A$.  Can one characterize the differential objects in a coEilenberg-Moore category of a differential category? The cofree $\oc$-coalgebras $(\oc(A), \delta_A)$ are always differential objects, but when are these {\em exactly\/} the differential objects?  

There is now much work to be done to examine the more detailed ramifications of the constructions in this paper and to place specific results in a more general geography.

\subparagraph{\textbf{Acknowledgements:}} The authors would like thank to thank Steve Lack for pointing us to an adjoint lifting theorem of Butler found in Barr and Wells' book \cite{barr2000toposes}, as well as the anonymous referee of CSL2020 for pointing us to Johnstone's adjoint lifting theorem \cite{johnstone1975adjoint_lifting}.

\newpage
\bibliography{emdc}   

\begin{thebibliography}{10}
\providecommand{\url}[1]{{#1}}
\providecommand{\urlprefix}{URL }
\expandafter\ifx\csname urlstyle\endcsname\relax
  \providecommand{\doi}[1]{DOI~\discretionary{}{}{}#1}\else
  \providecommand{\doi}{DOI~\discretionary{}{}{}\begingroup
  \urlstyle{rm}\Url}\fi

\bibitem{agore2011limits}
Agore, A.: Limits of coalgebras, bialgebras and {H}opf algebras.
\newblock Proceedings of the American Mathematical Society \textbf{139}(3),
  855--863 (2011)

\bibitem{barr2000toposes}
Barr, M., Wells, C.: Toposes, triples and theories.
\newblock Repr. Theory Appl. Categ. pp. x+288 (2005).
\newblock Corrected reprint of the 1985 original

\bibitem{bierman1995categorical}
Bierman, G.M.: What is a categorical model of intuitionistic linear logic?
\newblock In: International Conference on Typed Lambda Calculi and
  Applications, pp. 78--93. Springer (1995)

\bibitem{blute2018differential}
Blute, R., Cockett, J.R.B., Lemay, J.S.P., Seely, R.A.G.: Differential
  categories revisited.
\newblock arXiv preprint arXiv:1806.04804  (2018)

\bibitem{blute2009cartesian}
Blute, R., Cockett, J.R.B., Seely, R.A.G.: Cartesian differential categories.
\newblock Theory and Applications of Categories \textbf{22}(23), 622--672
  (2009)

\bibitem{blute2015cartesian}
Blute, R., Cockett, J.R.B., Seely, R.A.G.: Cartesian differential storage
  categories.
\newblock Theory and Applications of Categories \textbf{30}(18), 620--686
  (2015)

\bibitem{blute2006differential}
Blute, R.F., Cockett, J.R.B., Seely, R.A.G.: Differential categories.
\newblock Mathematical Structures in Computer Science \textbf{16}(06),
  1049--1083 (2006)

\bibitem{blute2010convenient}
Blute, R.F., Ehrhard, T., Tasson, C.: A convenient differential category.
\newblock Cahiers de Topologie et G{\'e}om{\'e}trie Diff{\'e}rentielle
  Cat{\'e}goriques \textbf{LIII}, 211--232 (2012)

\bibitem{blute2015derivations}
Blute, R.F., Lucyshyn-Wright, R.B.B., O'Neill, K.: Derivations in
  codifferential categories.
\newblock Cahiers de Topologie et G{\'e}om{\'e}trie Diff{\'e}rentielle
  Cat{\'e}goriques \textbf{57}, 243--280 (2016)

\bibitem{bucciarelli2010categorical}
Bucciarelli, A., Ehrhard, T., Manzonetto, G.: Categorical models for simply
  typed resource calculi.
\newblock Electronic Notes in Theoretical Computer Science \textbf{265},
  213--230 (2010)

\bibitem{clift2017cofree}
Clift, J., Murfet, D.: Cofree coalgebras and differential linear logic.
\newblock arXiv preprint arXiv:1701.01285  (2017)

\bibitem{cockett2014differential}
Cockett, J.R.B., Cruttwell, G.S.H.: Differential structure, tangent structure,
  and {SDG}.
\newblock Applied Categorical Structures \textbf{22}(2), 331--417 (2014)

\bibitem{cockett2016differential}
Cockett, J.R.B., Cruttwell, G.S.H.: Differential bundles and fibrations for
  tangent categories.
\newblock Cahiers de Topologie et G{\'e}om{\'e}trie Diff{\'e}rentielle
  Cat{\'e}goriques \textbf{59}, 10--92 (2018)

\bibitem{cockett2011differential}
Cockett, J.R.B., Cruttwell, G.S.H., Gallagher, J.D.: Differential restriction
  categories.
\newblock Theory and Applications of Categories \textbf{25}(21), 537--613
  (2011)

\bibitem{cockett_lemay_2018}
Cockett, J.R.B., Lemay, J.S.P.: Integral categories and calculus categories.
\newblock Mathematical Structures in Computer Science. A Journal in the
  Applications of Categorical, Algebraic and Geometric Methods in Computer
  Science \textbf{29}(2), 243--308 (2019)

\bibitem{cruttwell2019integral}
Cruttwell, G.S.H., Lemay, J.S.P., Lucyshyn-Wright, R.B.B.: Integral and
  differential structure on the free {$C^\infty$-}ring modality.
\newblock arXiv preprint arXiv:1902.04555  (2019)

\bibitem{ehrhard2002kothe}
Ehrhard, T.: On {K}{\"o}the sequence spaces and linear logic.
\newblock Mathematical Structures in Computer Science \textbf{12}(5), 579--623
  (2002)

\bibitem{ehrhard2017introduction}
Ehrhard, T.: An introduction to differential linear logic: proof-nets, models
  and antiderivatives.
\newblock Mathematical Structures in Computer Science \textbf{28}, 995--1060
  (2018)

\bibitem{fiore2007differential}
Fiore, M.: Differential structure in models of multiplicative biadditive
  intuitionistic linear logic.
\newblock In: International Conference on Typed Lambda Calculi and
  Applications, pp. 163--177. Springer (2007)

\bibitem{garner2018embedding}
Garner, R.: An embedding theorem for tangent categories.
\newblock Advances in Mathematics \textbf{323}, 668--687 (2018)

\bibitem{girard1987linear}
Girard, J.Y.: Linear logic.
\newblock Theoretical Computer Science \textbf{50}(1), 1--101 (1987)

\bibitem{golan2013semirings}
Golan, J.S.: Semirings and their Applications.
\newblock Springer Science \& Business Media (2013)

\bibitem{guo2012introduction}
Guo, L.: An introduction to Rota-Baxter algebra, vol.~2.
\newblock International Press Somerville (2012)

\bibitem{hartshorne2013algebraic}
Hartshorne, R.: Algebraic geometry, vol.~52.
\newblock Springer Science \& Business Media (2013)

\bibitem{hermida1998structural}
Hermida, C., Jacobs, B.: Structural induction and coinduction in a fibrational
  setting.
\newblock Information and Computation \textbf{145}(2), 107--152 (1998)

\bibitem{hyland2003glueing}
Hyland, M., Schalk, A.: Glueing and orthogonality for models of linear logic.
\newblock Theoretical Computer Science \textbf{294}(1-2), 183--231 (2003)

\bibitem{johnstone1975adjoint_lifting}
Johnstone, P.T.: Adjoint lifting theorems for categories of algebras.
\newblock The Bulletin of the London Mathematical Society \textbf{7}(3),
  294--297 (1975)

\bibitem{kainz1987c}
Kainz, G., Kriegl, A., Michor, P.: {$C^\infty$}-algebras from the functional
  analytic view point.
\newblock Journal of Pure and Applied Algebra \textbf{46}(1), 89--107 (1987)

\bibitem{kock2006synthetic}
Kock, A.: Synthetic differential geometry, vol. 333.
\newblock Cambridge University Press (2006)

\bibitem{lang2002algebra}
Lang, S.: Algebra, revised 3rd ed.
\newblock Graduate Texts in Mathematics \textbf{211} (2002)

\bibitem{lee2009manifolds}
Lee, J.M.: Manifolds and Differential Geometry, vol. 107.
\newblock American Mathematical Soc. (2009)

\bibitem{leung2017classifying}
Leung, P.: Classifying tangent structures using {W}eil algebras.
\newblock Theory and Applications of Categories \textbf{32}(9), 286--337 (2017)

\bibitem{linton1969coequalizers}
Linton, F.E.: Coequalizers in categories of algebras.
\newblock In: Seminar on triples and categorical homology theory, pp. 75--90.
  Springer (1969)

\bibitem{roryFMCS}
Lucyshyn-Wright, R.B.B.: Categories with representable tangent structure and
  models of {SDG} associated to codifferential categories (2014).
\newblock {F}oundational Methods in Computer Science

\bibitem{roryCT2014}
Lucyshyn-Wright, R.B.B.: Models of synthetic differential geometry associated
  to codifferential categories (2014).
\newblock {C}ategory Theory 2014 conference, Cambridge, UK

\bibitem{mac2013categories}
Mac~Lane, S.: Categories for the working mathematician.
\newblock Springer-Verlag, New York, Berlin, Heidelberg (1971, revised 2013)

\bibitem{manzonetto2012categorical}
Manzonetto, G.: What is a categorical model of the differential and the
  resource $\lambda$-calculi?
\newblock Mathematical Structures in Computer Science \textbf{22}(3), 451--520
  (2012)

\bibitem{mellies2009categorical}
Mellies, P.A.: Categorical semantics of linear logic.
\newblock Panoramas et Syntheses \textbf{27}, 15--215 (2009)

\bibitem{moerdijk2013models}
Moerdijk, I., Reyes, G.E.: Models for smooth infinitesimal analysis.
\newblock Springer Science \& Business Media (2013)

\bibitem{porst2006corings}
Porst, H.E.: On corings and comodules.
\newblock Archivum Mathematicum \textbf{42}(4) (2006)

\bibitem{rosicky1984abstract}
Rosick{\'y}, J.: Abstract tangent functors.
\newblock Diagrammes \textbf{12}(3), 1--11 (1984)

\bibitem{schalk2004categorical}
Schalk, A.: What is a categorical model of linear logic.
\newblock Manuscript, available from
  http://www.cs.man.ac.uk/$\sim$schalk/work.html  (2004)

\bibitem{seely1987linear}
Seely, R.A.G.: Linear logic,*-autonomous categories and cofree coalgebras,
  vol.~92.
\newblock American Mathematical Society (1989)

\bibitem{sweedler1969hopf}
Sweedler, M.E.: Hopf Algebras.
\newblock W. A. Benjamin, Inc. (1969)

\bibitem{wisbauer2008algebras}
Wisbauer, R.: Algebras versus coalgebras.
\newblock Applied Categorical Structures \textbf{16}(1), 255--295 (2008)

\end{thebibliography}
\bibliographystyle{spmpsci}      

\appendix
\section{Proof of Proposition \ref{Codifflambda}}\label{Appendix}

For readability we omit the subscripts throughout the calculations of this section. We wish to show that $\lambda$ is a distributive law and satisfies the tangent monad axioms. To help simplify the calculations we denote $\mathsf{d}^\circ = (1 \otimes \eta)\nabla$, which sometimes known as the \textbf{coderiving transformation} \cite{cockett_lemay_2018}. This natural transformation satisfies the following equalities \cite{cockett_lemay_2018}: 
\[(\mathsf{u} \otimes 1)\mathsf{d}^\circ = \eta \quad \quad (\nabla \otimes 1)\mathsf{d}^\circ = (1 \otimes \mathsf{d}^\circ)\nabla \quad \quad \mathsf{d}^\circ\mathsf{d} = (1 \otimes 1) + (\mathsf{d} \otimes 1)(1 \otimes \tau)(\mathsf{d}^\circ \otimes 1)\]
Therefore $\lambda$ can be re-expressed as $\lambda := \mathsf{S}(\pi_0) \iota_0 + \mathsf{d}(\mathsf{S}(\pi_0) \otimes \pi_1)\mathsf{d}^\circ \iota_1$. We first show that $\lambda$ satisfies the distributive law identities. 

\noindent $\bullet$ $\mathsf{S}(\lambda)\lambda\mathsf{T}(\mu) = \mu \lambda$: Here we use the chain rule \textbf{[d.5]}. 
\begin{align*}
\mathsf{S}(\lambda)\lambda\mathsf{T}(\mu) &= \mathsf{S}(\lambda)\left(\mathsf{S}(\pi_0) \iota_0 + \mathsf{d}(\mathsf{S}(\pi_0) \otimes \pi_1)\mathsf{d}^\circ \iota_1 \right)(\mu \oplus \mu) \\
&= \mathsf{S}(\lambda)\mathsf{S}(\pi_0) \iota_0(\mu \oplus \mu) + \mathsf{S}(\lambda)\mathsf{d}(\mathsf{S}(\pi_0) \otimes \pi_1)\mathsf{d}^\circ \iota_1(\mu \oplus \mu) \\
&= \mathsf{S}(\lambda)\mathsf{S}(\pi_0) \iota_0(\mu \oplus \mu) + \mathsf{d}(\mathsf{S}(\lambda) \otimes \lambda)(\mathsf{S}(\pi_0) \otimes \pi_1)\mathsf{d}^\circ \iota_1(\mu \oplus \mu) \\
&= \mathsf{S}^2(\pi_0) \mu \iota_0+ \mathsf{d}\left(\mathsf{S}^2(\pi_0) \otimes \left( \mathsf{d}(\mathsf{S}(\pi_0) \otimes \pi_1)\mathsf{d}^\circ \right) \right)\mathsf{d}^\circ \mu \iota_1 \\
&= \mu \mathsf{S}(\pi_0) \iota_0+ \mathsf{d}(1 \otimes \mathsf{d})(\mathsf{S}^2(\pi_0) \otimes \mathsf{S}(\pi_0) \otimes \pi_1)(1 \otimes \mathsf{d}^\circ ) \mathsf{d}^\circ \mu \iota_1 \\
&= \mu \mathsf{S}(\pi_0) \iota_0+ \mathsf{d}(1 \otimes \mathsf{d})(\mathsf{S}^2(\pi_0) \otimes \mathsf{S}(\pi_0) \otimes \pi_1)(\mu \otimes \mathsf{d}^\circ )\nabla \iota_1 \\
&= \mu \mathsf{S}(\pi_0) \iota_0+ \mathsf{d}(\mu \otimes \mathsf{d})(\mathsf{S}(\pi_0) \otimes \mathsf{S}(\pi_0) \otimes \pi_1)(1 \otimes \mathsf{d}^\circ )\nabla \iota_1 \\
&= \mu \mathsf{S}(\pi_0) \iota_0+ \mathsf{d}(\mu \otimes \mathsf{d})(\mathsf{S}(\pi_0) \otimes \mathsf{S}(\pi_0) \otimes \pi_1)(\nabla \otimes 1 )\mathsf{d}^\circ \iota_1 \\
&= \mu \mathsf{S}(\pi_0) \iota_0+ \mathsf{d}(\mu \otimes \mathsf{d}))(\nabla \otimes 1 )(\mathsf{S}(\pi_0) \otimes \pi_1)\mathsf{d}^\circ \iota_1 \\
&= \mu \mathsf{S}(\pi_0) \iota_0+ \mu \mathsf{d}(\mathsf{S}(\pi_0) \otimes \pi_1)\mathsf{d}^\circ \iota_1 \\
&= \mu\left(\mathsf{S}(\pi_0) \iota_0 + \mathsf{d}(\mathsf{S}(\pi_0) \otimes \pi_1)\mathsf{d}^\circ \iota_1 \right)\\
&= \mu \lambda
\end{align*}
$\bullet$ $\eta \lambda = \mathsf{T}(\eta)$: Here we use the linear rule  \textbf{[d.3]}.
\begin{align*}
\eta \lambda&=~\eta\left(\mathsf{S}(\pi_0) \iota_0 + \mathsf{d}(\mathsf{S}(\pi_0) \otimes \pi_1)\mathsf{d}^\circ \iota_1 \right)\\
&=~\eta \mathsf{S}(\pi_0) \iota_0+ \eta \mathsf{d}(\mathsf{S}(\pi_0) \otimes \pi_1)\mathsf{d}^\circ \iota_1 \\
&=~\pi_0 \eta \iota_0+ (\mathsf{u} \otimes 1)(\mathsf{S}(\pi_0) \otimes \pi_1)\mathsf{d}^\circ \iota_1 \\
&=~\pi_0 \eta \iota_0+ \pi_1 (\mathsf{u} \otimes 1) \mathsf{d}^\circ \iota_1 \\
&=~\pi_0 \eta \iota_0+ \pi_1 \eta \iota_1 \\
&=~ \eta \oplus \eta\\
&=~ \mathsf{T}(\eta)
\end{align*}
Next we show the tangent monad identities: 

\noindent $\bullet$ $\lambda p = \mathsf{S}(p)$: Here we use the biproduct identities. 
\begin{align*}
\lambda \mathsf{p} &=~ (\mathsf{S}(\pi_0) \iota_0 + \mathsf{d}(\mathsf{S}(\pi_0) \otimes \pi_1)\mathsf{d}^\circ \iota_1) \pi_0\\
&=~ \mathsf{S}(\pi_0) \iota_0\pi_0 + \mathsf{d}(\mathsf{S}(\pi_0) \otimes \pi_1)\mathsf{d}^\circ \iota_1\pi_0 \\
&=~ \mathsf{S}(\pi_0) \iota_0\pi_0 + \mathsf{d}(\mathsf{S}(\pi_0) \otimes \pi_1)\mathsf{d}^\circ 0 \\
&=~ \mathsf{S}(\pi_0) + 0 \\
&=~ \mathsf{S}(\pi_0) \\
&=~ \mathsf{S}(\mathsf{p})
\end{align*}
\noindent $\bullet$ $\mathsf{S}(\sigma) \lambda  = \langle \mathsf{S}(\rho_0)\lambda, \mathsf{S}(\rho_1)\lambda \rangle \sigma$: The pullback projections $\rho_0$ and $\rho_1$ are respectively defined as \\ \noindent $\xymatrixcolsep{2pc}\xymatrix{A \oplus A \oplus A \ar[r]^-{1 \oplus \pi_i} & A \oplus A}$ 
with $i \in \lbrace 0,1 \rbrace$. One can check that we have the following equality (where recall that here $\langle -, - \rangle$ is the pullback pairing) 
$$\langle \mathsf{S}(\rho_0) \lambda, \mathsf{S}(\rho_1) \lambda \rangle = \mathsf{S}(\pi_0) \iota_0 + \mathsf{d}(\mathsf{S}(\pi_0) \otimes \pi_1) \mathsf{d}^\circ \iota_1 + \mathsf{d}(\mathsf{S}(\pi_0) \otimes \pi_2) \mathsf{d}^\circ \iota_2 $$ 
and note that by definition: 
$$\sigma \iota_0 = \iota_0 \quad \quad \sigma \iota_1 = \iota_1 \quad \quad \sigma \iota_2 = \iota_1 \quad \quad \sigma \pi_0= \pi_0 \quad \quad \sigma \pi_1 = \pi_1 + \pi_2 $$ 
Then we have that: 
\begin{align*}
\mathsf{S}(\sigma) \lambda &= \mathsf{S}(\sigma) \left(\mathsf{S}(\pi_0) \iota_0 + \mathsf{d}(\mathsf{S}(\pi_0) \otimes \pi_1)\mathsf{d}^\circ\iota_1 \right) \\
&= \mathsf{S}(\sigma)\mathsf{S}(\pi_0) \iota_0 + (\sigma) \mathsf{d}(\mathsf{S}(\pi_0) \otimes \pi_1)\mathsf{d}^\circ \iota_1  \\ 
&= \mathsf{S}(\pi_0) \iota_0 +   \mathsf{d}\left( \mathsf{S}(\sigma) \otimes \sigma \right)(\mathsf{S}(\pi_0) \otimes \pi_1)\mathsf{d}^\circ \iota_1  \\
&= \mathsf{S}(\pi_0) \iota_0 +   \mathsf{d}\left(\mathsf{S}(\pi_0) \otimes (\pi_1 + \pi_2) \right)\mathsf{d}^\circ\iota_1  \\
&= \mathsf{S}(\pi_0) \iota_0 +   \mathsf{d}(\mathsf{S}(\pi_0) \otimes \pi_1) \mathsf{d}^\circ \iota_1 +  \mathsf{d}(\mathsf{S}(\pi_0) \otimes \pi_2)\mathsf{d}^\circ \iota_1   \\
&= \mathsf{S}(\pi_0) \iota_0\sigma+   \mathsf{d}(\mathsf{S}(\pi_0) \otimes \pi_1) \mathsf{d}^\circ \iota_1\sigma +  \mathsf{d}(\mathsf{S}(\pi_0) \otimes \pi_2) \mathsf{d}^\circ \iota_2 \sigma   \\
&= (\mathsf{S}(\pi_0) \iota_0 + \mathsf{d}(\mathsf{S}(\pi_0) \otimes \pi_1) \mathsf{d}^\circ \iota_1 + \mathsf{d}(\mathsf{S}(\pi_0) \otimes \pi_2) \mathsf{d}^\circ \iota_2) \sigma\\
&=  \langle \mathsf{S}(\rho_0)\lambda, \mathsf{S}(\rho_1)\lambda \rangle \sigma
\end{align*}
$\bullet$ $\mathsf{S}(z) \lambda = z$: Here we use the biproduct identities: 
\begin{align*}
\mathsf{S}(z) \lambda &= \mathsf{S}(\iota_0) \left(\mathsf{S}(\pi_0) \iota_0 + \mathsf{d}(\mathsf{S}(\pi_0) \otimes \pi_1) \mathsf{d}^\circ \iota_1 \right) \\
&=  \mathsf{S}(\iota_0)\mathsf{S}(\pi_0) \iota_0 +  \mathsf{S}(\iota_0) \mathsf{d}(\mathsf{S}(\pi_0) \otimes \pi_1) \mathsf{d}^\circ \iota_1 \\
&= \iota_0 + \mathsf{d}(\mathsf{S}(\iota_0) \otimes \iota_0)(\mathsf{S}(\pi_0) \otimes \pi_1) \mathsf{d}^\circ\iota_1\\ 
&= \iota_0 + \mathsf{d}(1 \otimes 0) \mathsf{d}^\circ \iota_1\\ 
&= \iota_0 + 0 \\
&= \iota_0 \\
&= z
\end{align*}
For the remaining two tangent monad identities, it will be useful to simplify $\lambda \mathsf{T}(\lambda)$ first:
\begin{align*}
& \lambda \mathsf{T}(\lambda)=~ \left(\mathsf{S}(\pi_0) \iota_0 + \mathsf{d}(\mathsf{S}(\pi_0) \otimes \pi_1)\mathsf{d}^\circ \iota_1 \right)(\lambda \oplus \lambda)  (\lambda \oplus \lambda) \\
&=~  \mathsf{S}(\pi_0) \iota_0(\lambda \oplus \lambda) + \mathsf{d}(\mathsf{S}(\pi_0) \otimes \pi_1)\mathsf{d}^\circ \iota_1(\lambda \oplus \lambda)\\
&=~ \mathsf{S}(\pi_0) \lambda \iota_0 + \mathsf{d}(\mathsf{S}(\pi_0) \otimes \pi_1)\mathsf{d}^\circ \lambda \iota_1  \\
&=~ \mathsf{S}(\pi_0) \left(\mathsf{S}(\pi_0) \iota_0 + \mathsf{d}(\mathsf{S}(\pi_0) \otimes \pi_1)\mathsf{d}^\circ \iota_1 \right) \iota_0 \\
&+ \mathsf{d}(\mathsf{S}(\pi_0) \otimes \pi_1)\mathsf{d}^\circ \left(\mathsf{S}(\pi_0) \iota_0 + \mathsf{d}(\mathsf{S}(\pi_0) \otimes \pi_1)\mathsf{d}^\circ \iota_1 \right) \iota_1  \\
&=~ \mathsf{S}(\pi_0) \mathsf{S}(\pi_0) \iota_0 \iota_0 + \mathsf{S}(\pi_0) \mathsf{d}(\mathsf{S}(\pi_0) \otimes \pi_1)\mathsf{d}^\circ \iota_1 \iota_0   \\&+~ \mathsf{d}(\mathsf{S}(\pi_0) \otimes \pi_1)\mathsf{d}^\circ \mathsf{S}(\pi_0) \iota_0 \iota_1 + \mathsf{d}(\mathsf{S}(\pi_0) \otimes \pi_1)\mathsf{d}^\circ\mathsf{d}(\mathsf{S}(\pi_0) \otimes \pi_1)\mathsf{d}^\circ \iota_1\iota_1  \\
&=~ \mathsf{S}(\pi_0) \mathsf{S}(\pi_0) \iota_0 \iota_0 +  \mathsf{d}(\mathsf{S}(\pi_0) \otimes \pi_0)(\mathsf{S}(\pi_0) \otimes \pi_1)\mathsf{d}^\circ \iota_0 \iota_1  \\&
+ \mathsf{d}(\mathsf{S}(\pi_0) \otimes \pi_1)(\mathsf{S}(\pi_0) \otimes \pi_0)\mathsf{d}^\circ  \iota_1 \iota_0 + \mathsf{d}(\mathsf{S}(\pi_0) \otimes \pi_1)(\mathsf{S}(\pi_0) \otimes \pi_1)\mathsf{d}^\circ \iota_1\iota_1  \\
&+  \mathsf{d}(\mathsf{S}(\pi_0) \otimes \pi_1)(\mathsf{d} \otimes 1)(1 \otimes \tau)(\mathsf{d}^\circ \otimes 1)(\mathsf{S}(\pi_0) \otimes \pi_1)\mathsf{d}^\circ \iota_1\iota_1\\
&=~ \mathsf{S}(\pi_0) \mathsf{S}(\pi_0) \iota_0 \iota_0 +  \mathsf{d}(\mathsf{S}(\pi_0) \otimes \pi_0)(\mathsf{S}(\pi_0) \otimes \pi_1)\mathsf{d}^\circ \iota_0 \iota_1  \\
& + \mathsf{d}(\mathsf{S}(\pi_0) \otimes \pi_1)(\mathsf{S}(\pi_0) \otimes \pi_0)\mathsf{d}^\circ  \iota_1 \iota_0 + \mathsf{d}(\mathsf{S}(\pi_0) \otimes \pi_1)(\mathsf{S}(\pi_0) \otimes \pi_1)\mathsf{d}^\circ \iota_1\iota_1  \\
&+  \mathsf{d}(\mathsf{d} \otimes 1)(\mathsf{S}(\pi_0) \otimes \pi_1 \otimes \pi_0)(\mathsf{S}(\pi_0) \otimes \pi_0 \otimes \pi_1)(\mathsf{d}^\circ \otimes 1)\mathsf{d}^\circ \iota_1\iota_1
\end{align*}
$\bullet$ $\mathsf{S}(\ell)\lambda \mathsf{T}(\lambda)=\lambda \ell$: 
\begin{align*}
&\mathsf{S}(\ell)\lambda \mathsf{T}(\lambda) =~ \mathsf{S}(\iota_0 \oplus \iota_1) \bigg( \mathsf{S}(\pi_0) \mathsf{S}(\pi_0) \iota_0 \iota_0 +  \mathsf{d}(\mathsf{S}(\pi_0) \otimes \pi_0)(\mathsf{S}(\pi_0) \otimes \pi_1)\mathsf{d}^\circ \iota_0 \iota_1 \\
& + \mathsf{d}(\mathsf{S}(\pi_0) \otimes \pi_1)(\mathsf{S}(\pi_0) \otimes \pi_0)\mathsf{d}^\circ  \iota_1 \iota_0 + \mathsf{d}(\mathsf{S}(\pi_0) \otimes \pi_1)(\mathsf{S}(\pi_0) \otimes \pi_1)\mathsf{d}^\circ \iota_1\iota_1  \\
&+  \mathsf{d}(\mathsf{d} \otimes 1)(\mathsf{S}(\pi_0) \otimes \pi_1 \otimes \pi_0)(\mathsf{S}(\pi_0) \otimes \pi_0 \otimes \pi_1)(\mathsf{d}^\circ \otimes 1)\mathsf{d}^\circ \iota_1\iota_1 \bigg) \\
&=~ \mathsf{S}(\iota_0 \oplus \iota_1)\mathsf{S}(\pi_0) \mathsf{S}(\pi_0) \iota_0 \iota_0 +  \mathsf{S}(\iota_0 \oplus \iota_1)\mathsf{d}(\mathsf{S}(\pi_0) \otimes \pi_0)(\mathsf{S}(\pi_0) \otimes \pi_1)\mathsf{d}^\circ \iota_0 \iota_1  \\
& + \mathsf{S}(\iota_0 \oplus \iota_1)\mathsf{d}(\mathsf{S}(\pi_0) \otimes \pi_1)(\mathsf{S}(\pi_0) \otimes \pi_0)\mathsf{d}^\circ  \iota_1 \iota_0 \\
&+ \mathsf{S}(\iota_0 \oplus \iota_1)\mathsf{d}(\mathsf{S}(\pi_0) \otimes \pi_1)(\mathsf{S}(\pi_0) \otimes \pi_1)\mathsf{d}^\circ \iota_1\iota_1  \\
&+  \mathsf{S}(\iota_0 \oplus \iota_1)\mathsf{d}(\mathsf{d} \otimes 1)(\mathsf{S}(\pi_0) \otimes \pi_1 \otimes \pi_0)(\mathsf{S}(\pi_0) \otimes \pi_0 \otimes \pi_1)(\mathsf{d}^\circ \otimes 1)\mathsf{d}^\circ \iota_1\iota_1 \\
&=~ \mathsf{S}(\pi_0)\mathsf{S}(\iota_0) \mathsf{S}(\pi_0) \iota_0 \iota_0 +  \mathsf{d}\left( \mathsf{S}(\iota_0 \oplus \iota_1) \otimes (\iota_0 \oplus \iota_1) \right)(\mathsf{S}(\pi_0) \otimes \pi_0)(\mathsf{S}(\pi_0) \otimes \pi_1)\mathsf{d}^\circ \iota_0 \iota_1  \\
&+ \mathsf{d}\left( \mathsf{S}(\iota_0 \oplus \iota_1) \otimes (\iota_0 \oplus \iota_1) \right)(\mathsf{S}(\pi_0) \otimes \pi_1)(\mathsf{S}(\pi_0) \otimes \pi_0)\mathsf{d}^\circ  \iota_1 \iota_0 \\
&+ \mathsf{d}\left( \mathsf{S}(\iota_0 \oplus \iota_1) \otimes (\iota_0 \oplus \iota_1) \right)(\mathsf{S}(\pi_0) \otimes \pi_1)(\mathsf{S}(\pi_0) \otimes \pi_1)\mathsf{d}^\circ \iota_1\iota_1  \\
&+ \mathsf{d}(\mathsf{d} \otimes 1)\left( \mathsf{S}(\iota_0 \oplus \iota_1) \otimes (\iota_0 \oplus \iota_1) \otimes (\iota_0 \oplus \iota_1) \right)(\mathsf{S}(\pi_0) \otimes \pi_1 \otimes \pi_0)
(\mathsf{S}(\pi_0) \otimes \pi_0 \otimes \pi_1)\\
&~~~(\mathsf{d}^\circ \otimes 1)\mathsf{d}^\circ \iota_1\iota_1 \\
&=~ \mathsf{S}(\pi_0)\iota_0 \iota_0 +  \mathsf{d}(\mathsf{S}(\pi_0) \otimes \pi_0)(\mathsf{S}(\iota_0) \otimes \iota_0)(\mathsf{S}(\pi_0) \otimes \pi_1)\mathsf{d}^\circ \iota_0 \iota_1  \\
& + \mathsf{d}(\mathsf{S}(\pi_0) \otimes \pi_1)(\mathsf{S}(\iota_0) \otimes \iota_0)(\mathsf{S}(\pi_0) \otimes \pi_0)\mathsf{d}^\circ  \iota_1 \iota_0 \\
&+ (\mathsf{S}(\pi_0) \otimes \pi_1)(\mathsf{S}(\iota_0) \otimes \iota_1)(\mathsf{S}(\pi_0) \otimes \pi_1)\mathsf{d}^\circ \iota_1\iota_1  \\
&+ \mathsf{d}(\mathsf{d} \otimes 1)(\mathsf{S}(\pi_0) \otimes \pi_1 \otimes \pi_0)(\mathsf{S}(\iota_0) \otimes \iota_1 \otimes \iota_0)(\mathsf{S}(\pi_0) \otimes \pi_0 \otimes \pi_1)(\mathsf{d}^\circ \otimes 1)\mathsf{d}^\circ \iota_1\iota_1 \\
&=~ \mathsf{S}(\pi_0)\iota_0 \iota_0 +  \mathsf{d}(\mathsf{S}(\pi_0) \otimes \pi_0)(1 \otimes 0)\mathsf{d}^\circ \iota_0 \iota_1  \\
&+ \mathsf{d}(\mathsf{S}(\pi_0) \otimes \pi_1)\mathsf{d}^\circ  \iota_1 \iota_0 + (\mathsf{S}(\pi_0) \otimes \pi_1)(1 \otimes 0)\mathsf{d}^\circ \iota_1\iota_1  \\
&+ \mathsf{d}(\mathsf{d} \otimes 1)(\mathsf{S}(\pi_0) \otimes \pi_1 \otimes \pi_0)(1 \otimes 0 \otimes 0)(\mathsf{d}^\circ \otimes 1)\mathsf{d}^\circ \iota_1\iota_1 \\
&=~ \mathsf{S}(\pi_0)\iota_0 \iota_0 + 0 + \mathsf{d}(\mathsf{S}(\pi_0) \otimes \pi_1)\mathsf{d}^\circ  \iota_1 \iota_0 + 0 + 0 \\
&=~\mathsf{S}(\pi_0)\iota_0 \iota_0 + \mathsf{d}(\mathsf{S}(\pi_0) \otimes \pi_1)\mathsf{d}^\circ \iota_1\iota_1 \\
&=~ \mathsf{S}(\pi_0) \iota_0(\iota_0 \oplus \iota_1) + \mathsf{d}(\mathsf{S}(\pi_0) \otimes \pi_1)\mathsf{d}^\circ \iota_1(\iota_0 \oplus \iota_1)\\
&=~ \left(\mathsf{S}(\pi_0) \iota_0 + \mathsf{d}(\mathsf{S}(\pi_0) \otimes \pi_1)\mathsf{d}^\circ \iota_1 \right) (\iota_0 \oplus \iota_1) \\
&=~ \lambda \ell
\end{align*}
$\bullet$ $\lambda \mathsf{T}(\lambda)c= \mathsf{S}(c)\lambda \mathsf{T}(\lambda)$: First notice the following equalities:  
$$\iota_j \iota_k c = \iota_k \iota_j \quad \quad c\pi_j \pi_k = \pi_k \pi_j$$
Then we have that: 
\begin{align*}
&\lambda \mathsf{T}(\lambda)c =~  \mathsf{S}(\pi_0) \mathsf{S}(\pi_0) \iota_0 \iota_0c +  \mathsf{d}(\mathsf{S}(\pi_0) \otimes \pi_0)(\mathsf{S}(\pi_0) \otimes \pi_1)\mathsf{d}^\circ \iota_0 \iota_1c  \\
& + \mathsf{d}(\mathsf{S}(\pi_0) \otimes \pi_1)(\mathsf{S}(\pi_0) \otimes \pi_0)\mathsf{d}^\circ  \iota_1 \iota_0c + \mathsf{d}(\mathsf{S}(\pi_0) \otimes \pi_1)(\mathsf{S}(\pi_0) \otimes \pi_1)\mathsf{d}^\circ \iota_1\iota_1 c \\
&+  \mathsf{d}(\mathsf{d} \otimes 1)(\mathsf{S}(\pi_0) \otimes \pi_1 \otimes \pi_0)(\mathsf{S}(\pi_0) \otimes \pi_0 \otimes \pi_1)(\mathsf{d}^\circ \otimes 1)\mathsf{d}^\circ \iota_1\iota_1c \\
&=~ \mathsf{S}(\pi_0) \mathsf{S}(\pi_0) \iota_0 \iota_0+  \mathsf{d}(\mathsf{S}(\pi_0) \otimes \pi_0)(\mathsf{S}(\pi_0) \otimes \pi_1)\mathsf{d}^\circ \iota_1 \iota_0  \\
& + \mathsf{d}(\mathsf{S}(\pi_0) \otimes \pi_1)(\mathsf{S}(\pi_0) \otimes \pi_0)\mathsf{d}^\circ  \iota_0 \iota_1 + \mathsf{d}(\mathsf{S}(\pi_0) \otimes \pi_1)(\mathsf{S}(\pi_0) \otimes \pi_1)\mathsf{d}^\circ \iota_1\iota_1  \\
&+  \mathsf{d}(\mathsf{d} \otimes 1)(\mathsf{S}(\pi_0) \otimes \pi_1 \otimes \pi_0)(\mathsf{S}(\pi_0) \otimes \pi_0 \otimes \pi_1)(\mathsf{d}^\circ \otimes 1)\mathsf{d}^\circ \iota_1\iota_1 \\
&=~ \mathsf{S}(c)\mathsf{S}(\pi_0) \mathsf{S}(\pi_0) \iota_0 \iota_0 +  \mathsf{d}(\mathsf{S}(c) \otimes c)(\mathsf{S}(\pi_0) \otimes \pi_1)(\mathsf{S}(\pi_0) \otimes \pi_0)\mathsf{d}^\circ \iota_1 \iota_0  \\
& + \mathsf{d}(\mathsf{S}(c) \otimes c)(\mathsf{S}(\pi_0) \otimes \pi_0)(\mathsf{S}(\pi_0) \otimes \pi_1)\mathsf{d}^\circ  \iota_0 \iota_1 \\
&+ \mathsf{d}(\mathsf{S}(c) \otimes c)(\mathsf{S}(\pi_0) \otimes \pi_1)(\mathsf{S}(\pi_0) \otimes \pi_1)\mathsf{d}^\circ \iota_1\iota_1  \\
&+  \mathsf{d}(\mathsf{d} \otimes 1)(\mathsf{S}(c) \otimes c \otimes c)(\mathsf{S}(\pi_0) \otimes \pi_0 \otimes \pi_1)(\mathsf{S}(\pi_0) \otimes \pi_1 \otimes \pi_0)(\mathsf{d}^\circ \otimes 1)\mathsf{d}^\circ \iota_1\iota_1 \\
&=~ \mathsf{S}(c)\mathsf{S}(\pi_0) \mathsf{S}(\pi_0) \iota_0 \iota_0+  \mathsf{S}(c)\mathsf{d}(\mathsf{S}(\pi_0) \otimes \pi_1)(\mathsf{S}(\pi_0) \otimes \pi_0)\mathsf{d}^\circ \iota_1 \iota_0  \\
& + \mathsf{S}(c)\mathsf{d}(\mathsf{S}(\pi_0) \otimes \pi_0)(\mathsf{S}(\pi_0) \otimes \pi_1)\mathsf{d}^\circ  \iota_0 \iota_1 + \mathsf{S}(c)\mathsf{d}(\mathsf{S}(\pi_0) \otimes \pi_1)(\mathsf{S}(\pi_0) \otimes \pi_1)\mathsf{d}^\circ \iota_1\iota_1  \\
&+  \mathsf{S}(c)\mathsf{d}(\mathsf{d} \otimes 1)(\mathsf{S}(\pi_0) \otimes \pi_0 \otimes \pi_1)(\mathsf{S}(\pi_0) \otimes \pi_1 \otimes \pi_0)(\mathsf{d}^\circ \otimes 1)\mathsf{d}^\circ \iota_1\iota_1 \\
&=~ \mathsf{S}(c)\mathsf{S}(\pi_0) \mathsf{S}(\pi_0) \iota_0 \iota_0 +  \mathsf{S}(c)\mathsf{d}(\mathsf{S}(\pi_0) \otimes \pi_1)(\mathsf{S}(\pi_0) \otimes \pi_0)\mathsf{d}^\circ \iota_1 \iota_0  \\
& + \mathsf{S}(c)\mathsf{d}(\mathsf{S}(\pi_0) \otimes \pi_0)(\mathsf{S}(\pi_0) \otimes \pi_1)\mathsf{d}^\circ  \iota_0 \iota_1 + \mathsf{S}(c)\mathsf{d}(\mathsf{S}(\pi_0) \otimes \pi_1)(\mathsf{S}(\pi_0) \otimes \pi_1)\mathsf{d}^\circ \iota_1\iota_1  \\
&+  \mathsf{S}(c)\mathsf{d}(\mathsf{d} \otimes 1)(\mathsf{S}(\pi_0) \otimes \pi_1 \otimes \pi_0)(\mathsf{S}(\pi_0) \otimes \pi_0 \otimes \pi_1)(\mathsf{d}^\circ \otimes 1)\mathsf{d}^\circ \iota_1\iota_1 \\
&=~ \mathsf{S}(c) \bigg( \mathsf{S}(\pi_0) \mathsf{S}(\pi_0) \iota_0 \iota_0 +  \mathsf{d}(\mathsf{S}(\pi_0) \otimes \pi_0)(\mathsf{S}(\pi_0) \otimes \pi_1)\mathsf{d}^\circ \iota_0 \iota_1  \\
& + \mathsf{d}(\mathsf{S}(\pi_0) \otimes \pi_1)(\mathsf{S}(\pi_0) \otimes \pi_0)\mathsf{d}^\circ  \iota_1 \iota_0 + \mathsf{d}(\mathsf{S}(\pi_0) \otimes \pi_1)(\mathsf{S}(\pi_0) \otimes \pi_1)\mathsf{d}^\circ \iota_1\iota_1  \\
&+  \mathsf{d}(\mathsf{d} \otimes 1)(\mathsf{S}(\pi_0) \otimes \pi_1 \otimes \pi_0)(\mathsf{S}(\pi_0) \otimes \pi_0 \otimes \pi_1)(\mathsf{d}^\circ \otimes 1)\mathsf{d}^\circ \iota_1\iota_1 \bigg) \\
&=~ \mathsf{S}(c)\lambda \mathsf{T}(\lambda) 
\end{align*}
\end{document}